\theoremstyle{plain}
\newtheorem{mainthm}{Theorem}
\newtheorem{maincor}{Corollary}
\newtheorem{thm}{Theorem}[section]
\newtheorem{lem}[thm]{Lemma}
\newtheorem{pro}[thm]{Proposition}
\newtheorem{prop}[thm]{Proposition}
\newtheorem*{thm*}{Theorem}
\newtheorem{cor}[thm]{Corollary}
\newtheorem{defi}[thm]{Definition}
\newtheorem{rem}[thm]{Remark}
\newtheorem{example}[thm]{Example}
\theoremstyle{definition}
\newtheorem{claim}{Claim}
 \DeclareMathOperator{\diam}{diam}
\DeclareMathOperator{\Ls}{Ls}
\newcommand{\eqdef}{\stackrel{\scriptscriptstyle\rm def}{=}}
\renewcommand{\theclaim}{\arabic{section}.\arabic{thm}.\arabic{claim}}
\setlist[enumerate,1]{label=(\arabic*)}
\setlist[enumerate,2]{label=(\alph*)}
\begin{document}

\title[Chaos game of IFSs]{On the chaos game of iterated function systems}

\author[P.~G.~Barrientos --- F.~H.~Ghane --- D.~Malicet --- A.~Sarizadeh]{Pablo G.~Barrientos --- Fatemeh H.~Ghane \\ Dominique Malicet --- Aliasghar Sarizadeh}

\address{\textsc{Pablo G.~Barrientos}\\
Instituto de Matem\'atica e Estat\'istica\\
Universidade Federal Fluminense\\
Rua M\'ario Santos Braga s/n - Campus Valonguinhos, Niter\'oi,
Brazil} \email{pgbarrientos@id.uff.br}

\address{\textsc{Fatemeh H.~Ghane}\\
Department of Mathematics\\
Ferdowsi University of Mashhad\\
Mashhad, Iran}
\email{ghane@math.um.ac.ir}

\address{\textsc{Dominique Malicet}\\
Instituto de Matem\'atica e Estat\'istica\\
Universidade do Estado do Rio de Janeiro\\
S\~ao Francisco Xavier, 524 - Pavilh\~ao Reitor Jo\~ao Lyra Filho,
Rio de Janeiro, Brazil} \email{malicet@mat.puc-rio.br}

\address{\textsc{Aliasghar Sarizadeh}\\
Department of Mathematics\\
Ilam University\\
Ilam, Iran}
\email{a.sarizadeh@mail.ilam.ac.ir}
%\submittedby {Pablo G.~Barrientos}
%\dedicatory {This is dedicated to a dear friend}
%  Math Subject Classifications
\subjclass[2010]{Primary: 37C05, 37C20; Secondary: 37E10\hfill}
\keywords{iterated function system, well-fibred attractors,
deterministic and probabilistic chaos game, forward and backward
minimality}
%  \thanks will become a 1st page footnote.
%\thanks{The first author would like to thank
%Carlos Meni\~no for the useful conversations during the
%preparation of this article.
%%The authors were partially supported by the following fellowships:  P.~G.~Barrientos by
% }
\thanks{The first author was partially supported by
Ministerio de Ciencia e Innovaci\'on project MTM2014-56953-P}
~\vspace{1cm}
\begin{abstract} \vspace{1cm}
Every quasi-attractor of an iterated function system (IFS) of
continuous functions on a first-countable Hausdorff topological
space is renderable by the probabilistic chaos game. By contrast,
we prove that the backward minimality is a necessary condition to
get the deterministic chaos game. As a consequence, we obtain that
an IFS of homeomorphisms of the circle is renderable by the
deterministic chaos game if and only if it is forward and backward
minimal. This result provides examples of attractors (a forward
but no backward minimal IFS on the circle) that are not renderable
by the deterministic chaos game. We also prove that every
well-fibred quasi-attractor is renderable by the deterministic
chaos game as well as quasi-attractors of both, symmetric and
non-expansive IFSs.
\end{abstract}
\maketitle
%%%%%%%%%%%%%%%%%%%%%%%%%%%%%%%%%%%%%%%%%%%%%%%%%%%%%%%%%%
\section{Introduction}
Within fractal geometry, iterated function systems (IFSs) provide
a method for both generating and characterizing fractal images. An
\emph{iterated function system} (IFS) can also be thought of as a
finite collection of functions which can be applied successively
in any order. Attractors of this kind of systems are self-similar
compact sets which draw any iteration of any point in an open
neighborhood of itself.

There are two methods for generating the attractor: the
\emph{deterministic} algorithm, in which all the transformations
are applied simultaneously, and the \emph{random} algorithm, in
which the transformations are applied one at a time in random
order following a probability. The \emph{chaos game}, popularized
by Barnsley~\cite{B88}, is the simple algorithm implementing the
random method. We have two different forms to run the chaos game.
One involves taking a starting point and then choose randomly the
transformation on each iteration accordingly to the assigned
probabilities. The latter starts by choosing a random order
iteration and then applying this orbital branch anywhere in the
basin of attraction. The first form of implementation is called
\emph{probabilistic chaos game}~\cite{BV11,BLR15}. The second
implementation is called~\emph{deterministic chaos game} (also
called~\emph{disjunctive chaos game})~\cite{L14,BL14,BFS14}.

 According to~\cite{BLR15}, every attractor of an
IFS of continuous maps on a first- countable Hausdorff topological
space is renderable by the probabilistic chaos game. By contract,
we will see that this is not the case of the deterministic chaos
game. Namely, we will provide necessary and sufficient conditions
to get the deterministic chaos game. As an application we will
obtain that an IFS of homeomorphisms of the circle is renderable
by the deterministic chaos game if and only if it is forward and
backward minimal which provides examples of attractors that are
not renderable by the deterministic chaos game.

\subsection{Iterated function systems}
Let $X$ be a Hausdorff topological space. We consider a finite set
$\mathscr{F}=\{f_1,\dots f_k\}$ of continuous functions from $X$
to itself. Associated with this set $\mathscr{F}$ we define the
\emph{semigroup} $\Gamma=\Gamma_\mathscr{F}$ generated by these
functions, the \emph{Hutchinson operator} $F=F_\mathscr{F}$ on the
hyperspace $\mathscr{H}(X)$ of the non-empty compact subsets of
$X$
$$
   F: \mathscr{H}(X) \to \mathscr{H}(X), \qquad F(A)=\bigcup_{i=1}^k
   f_i(A)
$$
and the \emph{skew-product} $\Phi=\Phi_\mathscr{F}$ on the product
space of $\Omega=\{1,\dots,k\}^\mathbb{N}$ and $X$
$$
\Phi: \Omega\times X \to \Omega \times X, \qquad
\Phi(\omega,x)=(\sigma(\omega),f_{\omega_1}(x)),
$$
where $\omega=\omega_1\omega_2\dots \in \Omega$ and
$\sigma:\Omega\to \Omega$ is the lateral shift map. The action of
the semigroup $\Gamma$ on $X$ is called the \emph{iterated function
system} generated by $f_1,\dots,f_k$ (or, by the family
$\mathscr{F}$ for short). Finally, given
$\omega=\omega_1\omega_2\dots\in \Omega$ and $x\in X$,
%the sequence of composition
%$$
%   f^n_\omega\eqdef f_{\omega_n}\circ \dots \circ f_{\omega_1},
%   \quad \text{for every $n\in\mathbb{N}$}
%$$
%is called \emph{orbital branch} corresponding to $\omega$ and the
%set
$$
  O^+_\omega(x)=\{f^n_\omega(x): n\in \mathbb{N}\}
  \quad \text{where} \ \
  f^n_\omega\eqdef f_{\omega_n}\circ \cdots \circ f_{\omega_1}
  \quad \text{for every $n\in\mathbb{N}$,}
  %\quad \text{and} \quad
  %O^-_\omega(x)= \bigcup_{i=1}^\infty f_{\omega_n}^{-1}\dots \circ f_{\omega_1}^{-1}(\{\}
$$
are called, respectively, the \emph{$\omega$-fiberwise orbit} of
$x$ and the \emph{orbital branch} corresponding to $\omega$ (or
the IFS-iteration driving by the sequence $\omega$). We introduce
now some different notions of invariant and minimal sets and after
that we give the definition of an attractor. In what follows $A$
denotes a  closed subset of $X$.

\subsection{Invariant and minimal sets}
We say that $A$ is a \emph{forward invariant} set if $f(A)\subset
A$ for all $f\in \Gamma$.
%Another different notion of
%invariance can be introduced attending to the dynamics of the
%associated Hutchinson operator acting on the hyperspace of
%non-empty closed subsets of $X$. Namely,
We also say that $A$ is a \emph{self-similar} set if
$$
A=f_1(A)\cup \dots \cup f_k(A).
$$
%Now we will give the notion of minimality.
%Another different notion of invariance can be introduced attending
%to the dynamics of the Hutchinson operator. Namely,
Notice that a minimal set regarding to the inclusion of forward
invariant non-empty (closed) sets is always a self-similar set. We
simply call it  as \emph{forward invariant minimal} set. By
extension, we say that the IFS is \emph{forward minimal} if the
unique forward invariant non-empty closed set is the whole space.
It is not difficult to see that forward minimality is equivalent
to density of any $\Gamma$-orbit. That is,  $A$ is a forward
invariant minimal set if and only if  $A$ coincides with the
closure of $\Gamma$-orbit $ \Gamma(x)\eqdef \{g(x): g\in \Gamma\}$
for all $x\in A$. Similarly, we will say that $A$ is a
\emph{forward minimal set} if $A$ is contained in the closure of
$\Gamma(x)$  for all $x\in A$. Thus, \emph{forward minimal
self-similar sets} are forward invariant minimal sets and
viceversa.

\begin{defi}
We say that $A$ is a \emph{quasi-attractor} of the IFS generated
by $\mathscr{F}$ if it is a forward minimal self-similar compact
set, i.e., if
$
    A \in \mathscr{H}(X), \ \ F(A)=A \ \ \text{and} \ \
    A=\overline{\Gamma(x)} \ \text{for all $x\in A$.}
$
\end{defi}

 Finally, notice that, as a straightforward
application of Zorn's lemma, every IFS on a compact space has a
quasi-attractor.

\subsection{Attractors} We introduce the notion of attractor
following~\cite{BV11,BV13,BLW14,BLR15}. To accomplish this, we
need to define first the pointwise basin of attraction.
\vspace{0.1cm}

Given a compact set $K$ of $X$, the \emph{$\Ls$-limit set} (also
called $\omega$-limit set or  topological upper limit set) of $K$
for $F$ is the set
$$
   \Ls F^n(K)  \eqdef \bigcap_{m\in\mathbb{N}} \overline{\bigcup_{n\geq m}
   F^n(K)}.
$$
Observe that $\Ls F^n(S)$ is always closed. However it can be
non-compact.  Now, let $A$ be a compact set. The \emph{pointwise
basin of $\Ls$-attraction} of $A$ for $F$  is defined to be the
set
$$
  \mathcal{B}^*_p(A)\eqdef \{x\in X: \, \Ls F^n(\{x\})= A \}. \vspace{0.1cm}
$$
Similarly,
the \emph{pointwise basin of
\emph{Vietoris}-attraction} for $F$ is the set
$$
\mathcal{B}_p(A)
\eqdef \{x\in X: \,  \lim_{n\to \infty } F^n(\{x\}) =A \}.
$$  The
convergence here is with respect to the Vietoris topology, or
equivalently, in the metric space case, with respect to the
Hausdorff metric~\cite[pp.~66--69]{S98}. Moreover, it is not
difficult to show that $\mathcal{B}_p(A) \subset
\mathcal{B}_p^*(A)$.

A compact set $A$ is a \emph{pointwise attractor}
%for the Hutchinson operator $F$
if there is an open set $U$ of $X$ such that $A\subset U \subset
\mathcal{B}_p(A)$. A slightly stronger notion of an attractor is
the following. $A$  is said to be a \emph{strict attractor}
%for the Hutchinson operator $F$
if there is an open neighborhood $U$ of $A$ such that
%\begin{equation*}
%\label{attractor}
$\lim_{n\to\infty} F^n(K) =  A$ %  \ \ \text{
in the
Vietoris topology for all compact sets $K\subset U$.
%}
%\end{equation*}
We denote by  $\mathcal{B}(A)$  the \emph{basin}
of the strict attractor. That is, the union of all open
neighborhoods $U$ of $A$ such that
the above convergence holds.%~\eqref{attractor} holds.
%\vspace{0.2cm}
%By abuse of the terminology, it is customary to say
%pointwise/strict attractor for the IFS generated by $\mathscr{F}$
%rather than for the associated Hutchinson operator $F$.

We remark that it is usual to include in the definition of
attractor that $F(A)=A$ (cf.~\cite[Def.~2.2]{BV13}). Under our
mild assumptions on $X$, it is unknown the continuity of the
Hutchinson operator (see~\cite{BL15}) and thus it is not, a
priori, clear that $A$ is a self-similar ($F$-invariant) set for
the IFS. Nevertheless, the following result proves that any
attractor must be a quasi-attractor and, in the case of a strict
attractor, must attract any compact set in the basin of attraction
which, a priori, is also not clear from the definition.
%\vspace{0.2cm}
%%%%%%%%%%%%%%%%%%%%%%%%%%%%%%%%%%%%%%%%%%
\begin{mainthm}
\label{thmA-attractor} Consider the IFS generated by $\mathscr{F}$
and a compact subset $A$.
\begin{enumerate}[itemsep=0.2cm]
\item \label{item1-thmA-attractor} $A$ is a quasi-attractor if and
only if $A\subset \mathcal{B}^*_p(A)$. Moreover, in this case,
$$
    A=\Ls F^n(K) \quad \text{for all non-empty compact sets $K\subset A$;}
$$
\item \label{item1-thmA-attractor3}
If $A$ is a pointwise attractor, it is a quasi-attractor and
$\mathcal{B}_p(A) = \mathcal{B}^*_p(A);$
%\item \label{item1-thmA-attractor}
%If $A$ is a pointwise attractor then $A$ is a forward minimal
%self-similar set  and $\mathcal{B}_p(A) = \mathcal{B}^*_p(A)$.
%for the IFS.
\item \label{item2-thmA-attractor}
If $A$ is a strict attractor, it is a pointwise attractor,
$\mathcal{B}(A) = \mathcal{B}_p(A)$ and for every non-empty
compact set $K \subset \mathcal{B}(A)$
$$
  \lim_{n\to\infty} F^n(K)= A  \ \ \text{in the Vietoris topology. }
$$
\end{enumerate}
\end{mainthm}
\vspace{0.1cm}
%~\\

Another notion of the ``attractor'' of an IFS is the concept of
\emph{semi-attractor} introduced by Lasota and Myjak
in~\cite{LM96}. Semi-attractors, sometimes called semi-fractals,
are the smallest (unique) forward invariant set defined by means
of the Kuratowski topological limits. We refer to~\cite{LM99,LMT04}
for a precise definition. Thus, these sets are also forward
invariant self-similar sets (in particular closed sets) but in
contracts with strict/pointwise attractors or quasi-attractors,
semi-attractors can be non-compact. \vspace{0.1cm}

Examples of pointwise attractors that are not strict attractors
can be find in~\cite{BLR15}. Also, one easily can construct
quasi-attractors of IFSs that are neither attractors nor
semi-attractor. A simple example is provided by an IFS generated
by a minimal map $f$ (for instance a rotation of the circle with
irrational rotation number). The whole space $A$ is the unique
non-empty forward invariant closed set but it is not the limit in
the Hausdorff metric  of $F^n(\{x\})=\{f^n(x)\}$ for any $x\in A$.
However,  it always holds that $\mathcal{B}^*_p(A)=A$.

%We remark that the phase space of a minimal IFS  defined on a
%compact space $A$ is not always an attractor.
%%for the associated Hutchinson operator.
%A simple example is provided by an IFS generated by a minimal map
%$f$ (for instance a rotation of the circle with irrational
%rotation number). The whole space $A$ is the unique non-empty
%forward invariant closed set but it is not the limit in the
%Hausdorff metric  of $F^n(\{x\})=\{f^n(x)\}$ for any $x\in A$.
%However, always it holds that $\mathcal{B}^*_p(A)=A$.
%
%
\subsection{Chaos game} Now, we focus our study to the chaos game
of quasi-attractors of the IFS generated by $\mathscr{F}$ on $X$.
In particular, this covers the cases of pointwise attractors,
strict attractors, compact semi-attractors and minimal IFSs on a
compact topological space. First, we will give a rigourously
definition of the chaos game. \vspace{0.2cm}

Following \cite{BV11}, we consider any probability $\mathbb{P}$ on
$\Omega$ with the following property:  there exists $0<p\leq 1/k$
so that $\omega_n$ is selected randomly from $\{1,\dots,k\}$ in
such a way that the probability of $\omega_n=i$ is greater than or
equal to $p$, regardless of the preceding outcomes, for all $i\in
\{1,\dots,k\}$ and  $n\in \mathbb{N}$. More formally, in terms of
the conditional probability,
\begin{equation}
\label{conditional assumption} \mathbb{P}( \omega_n=i \ | \
\omega_{n-1},\ldots,\omega_1 ) \geq p. \end{equation} Bernoulli
measures on $\Omega$ are typical examples of these kinds of
probabilities. \vspace{0.2cm}

\begin{defi}
Let $A$ be a quasi-attractor  of the IFS generated by
$\mathscr{F}$. We say that $A$ is renderable by
\begin{enumerate}[topsep=0.2cm,itemsep=0.2cm]
\item the \emph{probabilistic chaos game} if for any $x\in
\mathcal{B}^*_{p}(A)$ there is  $\Omega(x) \subset \Omega$ with
$\mathbb{P}(\Omega(x))=1$ such that
$$
%     \lim_{m\to\infty}  \{f^n_\omega(x): n\geq m\} = A
     A \subset \overline{O^+_\omega(x)}
     \quad \text{for all
     $\omega \in \Omega(x)$;}
$$
\item
the \emph{deterministic chaos game}
%if for every $\omega\in \Omega$ with dense orbit under the shift map
if there is $\Omega_0 \subset \Omega$ with
$\mathbb{P}(\Omega_0)=1$ such that
$$
%     \lim_{m\to\infty}  \{f^n_\omega(x): n\geq m\} = A
     A \subset \overline{O^+_\omega(x)}
     \quad \text{for all
      $\omega \in \Omega_0$  and
     $x\in \mathcal{B}^*_p(A)$.}
$$
%Moreover, the set $\Omega_0$ must to be contains some ``large" set
%of sequence independent of the probability. For instance the set
%of disjunctive sequences.
\end{enumerate}
\vspace{0.2cm} If the IFS is forward minimal (consequently $A$ is
the whole space and $\mathcal{B}^*_p(A)=A$) we simply say that the
IFS is renderable by the probabilistic/deterministic chaos game.
\end{defi}
\vspace{0.3cm}

The sequences in $\Omega$ which have a dense orbit under the shift
map $\sigma:\Omega \to \Omega$ are called \emph{disjunctive}. That
is, the sequence $ \omega=\omega_1\omega_2\dots \in \Omega$ which
contains all the finite words $\alpha=\alpha_1\dots\alpha_n \in
\{1,\dots,k\}^n$ of length $n$, for all $n\geq 1$. Notice that the
set consisting of all disjunctive sequences has
$\mathbb{P}$-probability one and its complement is a
$\sigma$-porous set with respect to the Baire metric in
$\Omega$~\cite{BL14}.
%The meaning of \emph{deterministic} in the notion of chaos game emphasizes that the set of disjunctive sequences is a priori well known.
The following result shows that the existence of a sequence
$\omega$ such that every point in the basin of attraction has
dense $\omega$-fiberwise orbit on the self-similar set is enough
to guarantee that for any disjunctive sequence we can also draw
the quasi-attractor. This brings to light that actually the
deterministic chaos game does not depend on the probability
$\mathbb{P}$. This fact deradomizes the algorithm of the chaos
game since disjunctive sequences in $\Omega$
 are a priori well determined
sequences. For this reason, the algorithm is called the
deterministic chaos game (or disjunctive chaos game).

\begin{mainthm}
\label{thmA-chaos-game} Consider the IFS generated by
$\mathscr{F}$ and let $A$ be a compact set of $X$ and $x\in
\mathcal{B}^*_p(A)$. Then,
\begin{enumerate}[topsep=0.1cm, itemsep=0.2cm]
\item \label{item1-thmA-chaos-game} $A$ and $\overline{\Gamma(x)}$ are forward invariant
compact sets of $X$ and $A\subset \overline{\Gamma(x)}$. In
particular, for every $n\in\mathbb{N}$ and  $\omega\in \Omega$,
    $$\overline{\{\displaystyle f^m_\omega(x): m\geq n\}} \ \
    \text{is a compact set;}$$
%\end{equation*}
\item \label{item2-thmA-chaos-game} $A\subset \overline{O^+_\omega(x)}$ if and only if
\begin{equation*}
    \lim_{n\to\infty} \overline{\displaystyle\{f^m_\omega(x): m\geq n\}} = A \
    \ \text{in the Vietoris topology};
\end{equation*}
\item \label{item3-thmA-chaos-game}
if $A$ is a quasi-attractor, the following are equivalent:
\begin{enumerate}[topsep=0.1cm, itemsep=0.1cm]
\item $A$ renderable by the deterministic chaos game;
\item there is $\omega \in \Omega$ such
that
%\begin{equation*}
$A\subset \overline{O^+_\omega(z)}$  \text{for all $z\in
\mathcal{B}^*_p(A)$};
\item $A \subset \overline{O^+_\omega(z)}$
     for all $z\in \mathcal{B}^*_p(A)$ and disjunctive sequences
     $\omega\in \Omega$.
\end{enumerate}
\end{enumerate}
\end{mainthm}

\subsubsection{Probabilistic chaos game}
Initially, the method was developed for contracting
IFSs~\cite{B88}. Later, it was generalized to attractors of  IFSs
of continuous functions on proper metric spaces~\cite{BV11}. For
minimal IFSs in the case of independent identically distributed
random product of continuous maps of a compact metric space it is
follows from the Breiman's law of large numbers~\cite{Bre60}.
Recently in~\cite{BLR15}, Barnsley, Le{\'s}niak and Rypka proved
the probabilistic chaos game for pointwise attractors of
continuous IFSs on a first-countable Hausdorff topological space
(in fact they only need to assume that the attractor is
first-countable). Moreover,
%as it was notified at the end
%of~\cite{BLR15},
their proof of the probabilistic chaos game also works
%semi-attractors, as well as
for the general case of quasi-attractors with minor modifications
(see Appendix~\ref{s:appendix}).

%As a consequence of Theorem~\ref{thmA-chaos-game}, the assumption
%that the space is normal can be removed. Indeed, it suffices to
%note that for every $x\in\mathcal{B}^*_p(A)$, by
%Conclusion
%~\ref{item1-thmA-chaos-game}
%of Theorem~\ref{thmA-chaos-game},
%the closure of the $\Gamma$-orbit of $x$ is a forward invariant
%compact set and thus a normal Hausdorff topological space.
%Restricting the IFSs to this space, we are in the assumptions
%of~\cite{BLR15} and hence one gets the following result.
%\vspace{0.1cm}
%
%\begin{maincor}
%\label{thmA} Let $A$ be a first-countable quasi-attractor of the
%IFS generated by $\mathscr{F}$. Then for every $x\in
%\mathcal{B}^*_p(A)$ there is $\Omega(x) \subset \Omega$ with
%$\mathbb{P}(\Omega(x))=1$ such that
%$$
%     A \subset \overline{O^+_\omega(x)} \quad \text{for all
%     $\omega \in \Omega(x)$.}
%$$
%In particular, any minimal IFS, pointwise attractor or
%semi-attractor of an IFS  of continuous maps of a first-countable
%Hausdorff topological space (also compact in the minimal
%case) satisfies the probabilistic chaos game.
%\end{maincor}

\vspace{-0.2cm}

\subsubsection{Deterministic chaos game}
In the case of attractors  of contractive IFSs a very simple
justification of the deterministic chaos game can be given along
the lines in~\cite[proof of Thm~5.1.3]{E98}. In~\cite{BFS13}, the
deterministic algorithm was also proved for attractors of weakly
hyperbolic IFSs, i.e., for point-fibred attractors (see the below definition),
which are an extension of the previous attractors of
contractive IFSs. Later, in~\cite{BL14} the deterministic chaos
was obtained for a more general class of attractors, the so-called
strongly-fibred.

An attractor $A$ is said to be \emph{strongly-fibred} if for every
open set $U\subset X$ such that $U\cap A\not=\emptyset$, there
exists $\omega\in \Omega$ so that
$$
    A_\omega\eqdef \bigcap_{n=1}^\infty f_{\omega_1}\circ \cdots
    \circ f_{\omega_n}(A)  \subset U.
$$
Similarly, $A$ is said to be \emph{point-fibred} if $A_\omega$ is
a singleton for all $\omega\in\Omega$. Same definitions will lead
to quasi-attractors. We are going to introduce a similar category
that we will call \emph{well-fibred} following the proposal
Kieninger's classification of IFS attractors~\cite[p.~97]{K02},
\cite{BLW14}.

\begin{defi}
We say that a quasi-attractor $A$ of the IFS is \emph{well-fibred}
if for every compact set $K$ in $A$ so that $K \neq A$ and for any
open cover $\mathcal{U}$ of $A$, there exist $g \in \Gamma$ and
$U\in\mathcal{U}$ such that $g(K)\subset U$. Equivalently, if
there are $\omega\in \Omega$ and $U\in\mathcal{U}$  so that
$$
K_\omega\eqdef \bigcap_{n=1}^\infty f_{\omega_1}\circ \cdots
    \circ f_{\omega_n}(K)  \subset U.
$$
\end{defi}

It is not difficult to see that, in the metric space case, a
quasi-attractor $A$ is well-fibred if and only if for every
compact set $K$ in $A$ so that $K \neq A$, there is a sequence
$(g_n)_n \subset \Gamma$ such that the diameter $\diam g_n(K)$
converges to zero as $n \to \infty$. On the other hand, it is easy
to show that strongly-fibred implies well-fibred. In fact, we will
prove that if $f_i(A)$ is not equal to $A$ for some generator
$f_i$ then both notions, strongly-fibred and well-fibred,  are
equivalent. After this observation, we can say that the following
result generalizes~\cite{BL14}.

\begin{mainthm}
\label{thmE} Every well-fibred quasi-attractor $A$ of an IFS of
continuous maps of a Hausdorff topological space is renderable by
 the deterministic chaos game. Moreover, if $A$ is either,
strongly-fibred or the generators of the IFS restricted to $A$ are
homeomorphisms, then
$$
         \Omega \times A =  \overline{\{\Phi^n(\omega,x):
         n\in\mathbb{N}\}} \quad \text{for all disjunctive $\omega\in \Omega$ and $x\in A$.}
$$
\end{mainthm}

As a consequence, we will prove that every forward and backward
minimal IFS of homeomorphisms $\mathscr{F}$ of a metric space so
that the associated semigroup has a map with exactly two fixed
points, one attracting and one repelling, is renderable by the
deterministic chaos game
(see~Corollary~\ref{cor-forward-backward}). Backward minimality
here means that the IFS generated by $\mathscr{F}^{-1}=\{f^{-1}:
f\in \mathscr{F}\}$ is forward minimal.

New examples of attractors renderable by the deterministic chaos
game which are not necessarily well-fibred were given
in~\cite{BFS14,L14}. Namely, in~\cite{BFS14} the deterministic
chaos was proved for any forward and backward minimal IFS of
homeomorphisms of the circle and for every IFS of a compact metric
space that contains a minimal map. In~\cite{L14} the deterministic
algorithm was shown to work also for attractors of IFSs comprising
maps which do not increase distances. In fact, basically with the
same proof (see Appendix~\ref{s:appendix}), the result of
Le{\'s}niak also holds for quasi-attractors of
\emph{non-expansive} IFSs, i.e., iterated function systems
generated by a finite family $\mathscr{F}$ of maps of a metric
space $X$ so that
$$
    d(f(x),f(y))\leq d(x,y) \quad \text{for all $f\in
    \mathscr{F}$}.
$$
This class of systems include  equicontinuous IFSs
(see~\cite[Lem.~3.2]{L15} and \cite[Prop.~8]{MM15}) and  weakly
hyperbolic IFSs (see \cite[Thm.~1]{BI11} and
\cite[Cor.~6.4]{BKNNS15}). However, a priori, there are no
relations between quasi-attractors of non-expansive IFSs and
strongly-fibred or well-fibred attractors.

 In  brief, it is known that the deterministic chaos algorithm  holds
 in the following cases:
\begin{enumerate}
\item well-fibred quasi-attractors of IFSs on Hausdorff topological spaces,
\item quasi-attractors of non-expansive
IFSs on metric spaces,
\item forward and backward minimal IFSs of homeomorphisms of
the circle,
\item IFSs on a compact metric space having a minimal
map.
\end{enumerate}
The following theorem includes another different class of systems
to this list: the quasi-attractors of symmetric IFSs. We say that
an IFS generated by a  family of homeomorphisms $\mathscr{F}$ of
$X$ is \emph{symmetric} if for each $f\in \mathscr{F}$ it holds
that $f^{-1}\in\mathscr{F}$.

\begin{mainthm}
\label{thmF} Every quasi-attractor of a symmetric IFS on a
Hausdorff topological space is renderable by the deterministic
chaos game.
\end{mainthm}

We will give  examples of symmetric non-minimal IFSs with a
quasi-attractor which is not an attractor (Remark~\ref{rem-quasi})
and attractors of symmetric IFSs which are not include in the
previous list (Exemple~\ref{example-final}). Moreover, we will
prove that the phase space of a forward minimal symmetric IFS on a
connected space is a strict atractor
(Proposition~\ref{prop-quasi-symmetric}).

\subsubsection{Necessary condition to get the deterministic chaos game}
The next result goes in the direction to provide necessarily
conditions to yield the deterministic chaos game. First we need to
introduce the notion of backward minimality. A set $A$ of $X$ is
said to be \emph{backward invariant} for the IFS if $\emptyset\not
= f^{-1}(A) \subset A $ for all $f\in \Gamma$, where $f^{-1}(A)$
denotes the preimage of $A$ by the continuous map $f$. Hence, we
say that the IFS is \emph{backward minimal} if the unique backward
invariant non-empty closed set is the whole space.

\begin{mainthm}
\label{thmB} Every  forward minimal IFS generated by continuous
maps of a compact Hausdorff topological space which is renderable
by the deterministic chaos game must be also backward minimal.
%is forward and backward minimal if it provides that
%\begin{equation*}
%\text{there exists $\omega\in \Omega$ such that
%$\overline{O^+_\omega(x)}=A$ for all $x\in A$.}
%\end{equation*}
\end{mainthm}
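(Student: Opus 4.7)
I will argue the contrapositive: assuming the IFS is forward minimal but \emph{not} backward minimal, I will exhibit an element of $\Gamma$ whose existence contradicts the deterministic chaos game hypothesis. Fix a proper non-empty closed backward invariant set $K \subsetneq X$ and set $V := X \setminus K$, a non-empty proper open set. The first observation is that $V$ is forward invariant under every $f \in \Gamma$: if $x \in V$ and $f(x) \in K$, then $x \in f^{-1}(K) \subset K$, contradicting $x \in V$. In particular, once an orbital branch $f^n_\omega(x)$ enters $V$, it remains there at all subsequent times.

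Applying the deterministic chaos game together with Theorem~\ref{thmA-chaos-game}\eqref{item3-thmA-chaos-game}, I pick some $\omega \in \Omega$ with $\overline{O^+_\omega(x)} = X$ for every $x \in X$. Denseness then yields, for each $x \in X$, an index $n(x) \in \mathbb{N}$ with $f^{n(x)}_\omega(x) \in V$. The crucial step is to promote this to a \emph{uniform} bound on $n(x)$: by continuity of $f^{n(x)}_\omega$, the preimage $(f^{n(x)}_\omega)^{-1}(V)$ is an open neighborhood of $x$, and these neighborhoods cover the compact space $X$. Extracting a finite subcover and letting $N$ be the maximum of the finitely many associated indices, the forward invariance of $V$ established above upgrades this to $f^N_\omega(x) \in V$ for \emph{every} $x \in X$, that is, $f^N_\omega(X) \subset V$.

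This is the desired contradiction: $f^N_\omega = f_{\omega_N} \circ \cdots \circ f_{\omega_1}$ belongs to $\Gamma$, so backward invariance of $K$ requires $\emptyset \neq (f^N_\omega)^{-1}(K) \subset K$; but $f^N_\omega(X) \subset V$ forces $(f^N_\omega)^{-1}(K) = \emptyset$. Hence no such $K$ can exist and the IFS is backward minimal. The only non-trivial ingredient in the argument is the compactness-based uniformization producing $N$; every other step is a direct unwinding of the definitions of forward/backward invariance and of the deterministic chaos game, together with the characterization in Theorem~\ref{thmA-chaos-game}\eqref{item3-thmA-chaos-game}.
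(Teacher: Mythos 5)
Your proof is correct, and it takes a genuinely different (though closely related) route to the same contrapositive. The paper fixes the proper nonempty closed backward invariant set $K$ and the dense orbital branch $\omega$, and then forms the nested nonempty closed preimages $K_n=f_{\omega_1}^{-1}\circ\dots\circ f_{\omega_n}^{-1}(K)$; compactness of $X$ via the finite intersection property produces a point of $\bigcap_n K_n$ whose $\omega$-fiberwise orbit is trapped in $K\neq X$, contradicting density of that orbit. You instead note that $V=X\setminus K$ is forward invariant under every element of $\Gamma$, use density of the $\omega$-orbits together with continuity and an open-cover compactness argument to obtain a uniform time $N$ with $f^N_\omega(X)\subset V$, and then contradict the nonemptiness clause $\emptyset\neq g^{-1}(K)$ of the paper's definition of backward invariance, applied to $g=f^N_\omega\in\Gamma$. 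The two arguments are essentially dual uses of compactness (closed sets and the finite intersection property versus open covers) and exploit the two halves of the condition $\emptyset\neq f^{-1}(K)\subset K$ in complementary ways; note that the paper's proof also needs the nonemptiness clause, to guarantee $K_n\neq\emptyset$. Your appeal to Item~\eqref{item3-thmA-chaos-game} of Theorem~\ref{thmA-chaos-game} is harmless but not needed: the existence of one branch $\omega$ with $\overline{O^+_\omega(x)}=X$ for all $x$ already follows from $\mathbb{P}(\Omega_0)=1$, hence $\Omega_0\neq\emptyset$ (this is exactly how the paper starts as well). What your version buys is a slightly more quantitative conclusion, namely an explicit element of the semigroup with a uniform escape time mapping all of $X$ into the complement of $K$; the paper's version is marginally shorter and only needs continuity to know that the sets $K_n$ are closed.
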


As an application of the above result we can complete the main
result in~\cite{BFS14} obtaining the following corollary:

\begin{maincor}
\label{maincor-circ} Let $f_1,\dots,f_k$ be circle homeomorphisms.
Then the following statements are equivalent:
\begin{enumerate}[topsep=0.1cm, itemsep=0.1cm]
\item \label{item-main-cor-1} the IFS generated by $f_1,\dots,f_k$ is renderable by the deterministic chaos
game;
\item \label{item-main-cor-2} there exists $\omega\in \Omega$
such that $\overline{O^+_\omega(x)}=S^1$ for all $x\in S^1$;
\item \label{item-main-cor-3} the IFS generated by $f_1,\dots,f_k$ is forward and backward minimal.
\end{enumerate}
\end{maincor}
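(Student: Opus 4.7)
My plan is to establish the chain of implications (1)$\Leftrightarrow$(2) and (1)$\Rightarrow$(3)$\Rightarrow$(1), reducing everything to results already in this paper together with the main theorem of~\cite{BFS14}. Before starting, I would observe that in all three conditions the IFS is implicitly forward minimal on $S^1$: this is built into the definition of the deterministic chaos game for~(1), is part of~(3), and follows from~(2) since $\overline{\Gamma(x)}\supset\overline{O^+_\omega(x)}=S^1$ for every $x$. Since the $f_i$ are homeomorphisms, $S^1$ is automatically self-similar ($\bigcup_{i=1}^k f_i(S^1)=S^1$), so by Theorem~\ref{thmA-attractor}(\ref{item1-thmA-attractor}) we have $S^1\subset \mathcal{B}_p(S^1)$, and $S^1$ is a pointwise attractor whose basin is the whole circle.

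With this setup, the equivalence (1)$\Leftrightarrow$(2) is a direct application of Theorem~\ref{thmA-chaos-game}(\ref{item3-thmA-chaos-game}): for a pointwise attractor, the deterministic chaos game is equivalent to the existence of a single sequence $\omega\in\Omega$ such that every point in the basin has $\omega$-fiberwise orbit dense in $A$. Since here $A=S^1$ and $\mathcal{B}_p(A)=S^1$, this is literally condition~(2).

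The implication (1)$\Rightarrow$(3) is then an immediate application of Theorem~\ref{thmB}: the circle is compact Hausdorff and the $f_i$ are continuous, so a forward minimal circle IFS satisfying the deterministic chaos game must also be backward minimal.

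The implication (3)$\Rightarrow$(1) is not established in the present paper; it is precisely the main theorem of~\cite{BFS14}, which shows that a forward and backward minimal IFS of circle homeomorphisms satisfies the deterministic chaos game. Consequently, there is no genuine obstacle to overcome in the corollary itself: the previously missing direction (1)$\Rightarrow$(3), which was the hard part at the level of the general theory, is handled cleanly by Theorem~\ref{thmB}, and the corollary is obtained by assembling these pieces.
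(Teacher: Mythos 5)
Your overall architecture matches the paper's: the equivalence \eqref{item-main-cor-1}$\Leftrightarrow$\eqref{item-main-cor-2} via Item~\eqref{item3-thmA-chaos-game} of Theorem~\ref{thmA-chaos-game} (after noting that forward minimality makes $S^1$ a self-similar forward minimal set, hence $\mathcal{B}_p(S^1)=S^1$ and $S^1$ is a pointwise attractor), and \eqref{item-main-cor-1}$\Rightarrow$\eqref{item-main-cor-3} via Theorem~\ref{thmB}, are exactly the paper's two ``easy'' steps. The gap is in the direction \eqref{item-main-cor-3}$\Rightarrow$\eqref{item-main-cor-1}. You assert that it ``is precisely the main theorem of~\cite{BFS14}'', but Theorem~A of~\cite{BFS14} is proved only for \emph{orientation-preserving} circle homeomorphisms, whereas the corollary here allows arbitrary homeomorphisms $f_1,\dots,f_k$, possibly orientation-reversing. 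So as written your citation does not cover the stated generality, and this is not a cosmetic point: the proof in~\cite{BFS14} rests on Antonov's theorem, which is stated for orientation-preserving maps, so the extension requires an argument.

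That extension is in fact the only substantive content of the paper's proof of this corollary: the authors explain that the orientation-preserving hypothesis can be removed either by observing that Antonov's original argument (as presented in~\cite[proof of Theorem~2]{GGKV14}) never uses it, or by invoking the generalization of Antonov's theorem in~\cite[Thm.~D]{M15}, and only then do they conclude \eqref{item-main-cor-3}$\Rightarrow$\eqref{item-main-cor-1}. To repair your proposal you must either restrict the corollary to orientation-preserving generators, or supply (as the paper does) a justification that the key lemma of~\cite{BFS14} — and hence its Theorem~A — remains valid without the orientation-preserving assumption. Your closing claim that ``there is no genuine obstacle to overcome in the corollary itself'' is therefore too optimistic; the obstacle is precisely this removal of the orientation hypothesis.
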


This result allows us to construct a contra-example of the
deterministic chaos game for general IFSs. More specifically, any
forward minimal but not backward minimal IFS of homeomorphisms of
the circle is not renderable by the deterministic chaos game.
Observe that for ordinary dynamical systems on the circle, the
minimality of a map $T$ is equivalent to that of $T^{-1}$. However
this fact does not hold for IFSs with more than one generator:

\begin{maincor}
\label{maincor-contraexemplo} There exists an IFS of
homeomorphisms of the circle that is forward minimal but not
backward minimal. Moreover,  $S^1$  is a strict attractor of this
IFS which, consequently, is not renderable by  the deterministic
chaos game.
\end{maincor}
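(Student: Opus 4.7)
The corollary has two parts; the second will follow quickly from the first via Corollary~\ref{maincor-circ}, so the main task is the construction of two circle homeomorphisms $f_1,f_2$ whose IFS is forward minimal but not backward minimal.

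My plan for the construction is as follows. I look for a Cantor set $K\subsetneq S^1$ together with $f_1,f_2\in\mathrm{Homeo}(S^1)$ such that $f_i^{-1}(K)\subset K$ for $i=1,2$ (so $K$ is a proper closed non-empty backward-invariant set, witnessing the failure of backward minimality) while every forward $\Gamma$-orbit is dense in $S^1$. Writing $U=S^1\setminus K$, the first condition reads $f_i(U)\subset U$, so $U$ is open and forward-invariant; since any orbit starting in $U$ stays in $U$, density of all orbits forces $U$ to be dense, which is precisely why $K$ must be a Cantor set. A natural candidate for $f_1$ is a Denjoy-type homeomorphism with unique minimal set $K$, constructed by blowing up an orbit of an irrational rotation along a summable sequence of arc-lengths; this automatically yields $f_1(K)=K$. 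The real work is to design $f_2$ so that $f_2(K)\supsetneq K$ strictly, $f_2(U)\subset U$, and the combined dynamics of $\{f_1,f_2\}$ on $U$ (which, as a disjoint union of blown-up arcs, carries a natural combinatorial structure) has dense forward orbits on the whole circle.

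Once such a pair $(f_1,f_2)$ is in hand, Corollary~\ref{maincor-circ} immediately rules out the deterministic chaos game. To exhibit the IFS as a strict attractor on a compact metric space, I would take the ambient space to be $S^1$ itself and the attractor $A=S^1$: self-similarity $F(S^1)=S^1$ is immediate, and by Theorem~\ref{thmA-attractor}\eqref{item1-thmA-attractor} forward minimality already makes $S^1$ a pointwise attractor. To upgrade to a strict attractor, i.e., to verify $F^n(K)\to S^1$ in Hausdorff metric for every compact $K\subset S^1$, I would exploit compactness of $S^1$ and forward minimality to extract, for every $\epsilon>0$, a uniform word-length $N(\epsilon)$ such that $F^n(\{x\})$ is $\epsilon$-dense for all $x\in S^1$ and $n\geq N(\epsilon)$; the presence of two genuinely distinct generators makes this combinatorial covering argument work.

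The main obstacle is the construction of $f_2$. The two requirements are in tension: backward-invariance of $K$ under $f_2^{-1}$ demands that $f_2$ respect the structure of $K$, while forward minimality of the joint IFS forces orbits starting inside $K$ to eventually escape $K$ and then accumulate densely on the whole circle. Striking this balance, by letting $f_2$ push selected points of $K$ into the complementary arcs while maintaining $f_2(U)\subset U$ and arranging the combined combinatorial action of $\{f_1,f_2\}$ on the tree of arcs to be minimal, is the delicate technical step.
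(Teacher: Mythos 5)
Your reduction of the second claim to the first is sound (once a forward minimal, not backward minimal IFS of circle homeomorphisms exists, Corollary~\ref{maincor-circ} excludes the deterministic chaos game, and $A=S^1$ is a strict attractor -- for that last point you do not even need a uniform $N(\epsilon)$: since $F^n(K)\supset F^n(\{x\})$ for any $x\in K$, pointwise attraction already gives $F^n(K)\to S^1$). The gap is that the heart of the corollary, the existence of such an IFS, is never actually established. You list the desired properties (a Cantor set $K$ with $f_i^{-1}(K)\subset K$, equivalently $f_i(S^1\setminus K)\subset S^1\setminus K$, together with density of every forward $\Gamma$-orbit) and then explicitly defer the decisive step -- constructing $f_2$ so that the semigroup generated by a Denjoy map $f_1$ and $f_2$ is forward minimal -- as ``the delicate technical step.'' That is exactly where the difficulty sits: forward minimality requires every point, including every point of $K$ and of every wandering gap, to have dense orbit under positive words only, i.e.\ you must rule out all proper closed forward invariant sets of the semigroup, and your setup provides no mechanism for doing so (for a Denjoy map alone such sets are plentiful, and nothing in your description of $f_2$ controls them). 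As written, the proposal is a plan for a construction, not a proof.

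The paper avoids precisely this obstacle by a different device. It starts from a finitely generated group $G$ of circle diffeomorphisms admitting an exceptional minimal set $K$ such that the $G$-orbit of every point of $S^1\setminus K$ is dense (Navas, Exercise~2.1.5); consequently the only closed $G$-invariant subsets are $\emptyset$, $K$ and $S^1$. Choosing a \emph{symmetric} generating family $f_1,\dots,f_n$ (generating $G$ as a semigroup) makes any closed set that is forward invariant for the IFS automatically $G$-invariant, hence equal to $\emptyset$, $K$ or $S^1$; adding one extra homeomorphism $h$ with $h(K)\supsetneq K$ (possible because any two Cantor sets are homeomorphic) eliminates the possibility $K$, yielding forward minimality, while $f_i^{-1}(K)=K$ and $h^{-1}(K)\subset h^{-1}(h(K))=K$ keep $K$ backward invariant. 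If you wish to keep your Denjoy-based route, you would need an analogous classification of the closed forward invariant sets of your two-generator semigroup, which is exactly the content you have left open; without it the first assertion of the corollary, and hence the counterexample, is not proved.
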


We want to indicate that, as we will see, most of the minimal IFSs
of homeomorphisms of the circle have $S^1$ as an strict attractor.
Namely, we will prove that $S^1$ is an strict attractor of a
minimal IFS of homeomorphisms of the circle if there is no common
invariant measure for the generators
(Proposition~\ref{prop:Malicet}).

\noindent {\bf Organization of the paper:} In \S\ref{sec2} we
study the basin of attraction  of pointwise/strict attractors and
we prove Theorem~\ref{thmA-attractor} and the two first conclusions of
Theorem~\ref{thmA-chaos-game}. We complete the proof of
Theorem~\ref{thmA-chaos-game} in \S\ref{sec41} where we study the
deterministic chaos game. In \S\ref{sec42} we prove
Theorem~\ref{thmB} and in \S\ref{sec43} we study the deterministic
chaos game on the circle proving Corollaries~\ref{maincor-circ}
and~\ref{maincor-contraexemplo}. The proof of Theorems~\ref{thmE}
and~\ref{thmF} are developed in \S\ref{sec44} where we study
sufficient conditions for the deterministic chaos game. Finally,
for completeness of the paper, we include an appendix where we
extend the main results of~\cite{BLR15} and \cite{L14} for the
general case of quasi-attractors.

%containing the proofs of the main theorems of~\cite{BLR15} and
%\cite{L14} that allows us to extend the corresponding results for
%quasi-attractors.
%to in order to explain the
%(minor) modifications in the arguments that

\vspace{0.5cm}

\noindent {\bf Standing notation:} In the sequel, $X$ denotes a
Hausdorff topological space. We assume that we work with an IFS of
continuous maps $f_1,\dots,f_k$ on $X$ and we hold the above
notations introduced in this section.

\section{On the basin of attraction}
\label{sec2} We will study the basin of attraction of
quasi-attractors. This allows us to prove
Theorem~\ref{thmA-attractor} and Conclusions~\ref{item1-thmA-chaos-game}
and \ref{item2-thmA-chaos-game} of Theorem~\ref{thmA-chaos-game}.

\subsection{Topological preliminaries:}
We start giving a basic topological lemma:

\begin{lem}
\label{lem-topologia-X} Let $A$ and $B$ be two compact sets in
$X$.
\begin{enumerate}
\item If $A\cap B=\emptyset$ then
there exist disjoint open neighborhoods of $A$ and $B$;
\item If $\{U_1,\dots,U_s\}$ is a  finite open cover of $A$ then
there exist compact sets $A_1, \dots, A_s$ in $X$ so that
$$
A=A_1\cup\dots\cup A_s \ \ \text{and} \ \ A_i\subset U_i \ \
\text{for $i=1,\dots,s$}.
$$
\end{enumerate}
\end{lem}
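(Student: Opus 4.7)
The plan is to treat the two statements separately, using only that $X$ is Hausdorff together with the fact that compact subspaces of Hausdorff spaces are closed and, in their subspace topology, normal (hence regular).

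For item (1), I would run the standard two-step separation argument. First I would show the weaker statement: if $A$ is compact, $b\notin A$, and $X$ is Hausdorff, then $A$ and $\{b\}$ have disjoint open neighborhoods. For each $a\in A$ the Hausdorff property yields disjoint open sets $U_a\ni a$ and $V_a\ni b$; the family $\{U_a\}_{a\in A}$ covers $A$, so compactness gives a finite subcover $U_{a_1},\dots,U_{a_n}$, and then $U_b\eqdef U_{a_1}\cup\cdots\cup U_{a_n}$ and $V_b\eqdef V_{a_1}\cap\cdots\cap V_{a_n}$ are disjoint open neighborhoods of $A$ and $b$. I would then bootstrap: varying $b\in B$ produces a cover $\{V_b\}_{b\in B}$ of $B$, and compactness of $B$ yields a finite subcover $V_{b_1},\dots,V_{b_m}$; the sets $V\eqdef V_{b_1}\cup\cdots\cup V_{b_m}$ and $U\eqdef U_{b_1}\cap\cdots\cap U_{b_m}$ are the required disjoint open neighborhoods of $B$ and $A$.

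For item (2), I would work inside $A$ with its subspace topology, which is compact Hausdorff and hence normal (in particular regular). Since $A$ is closed in $X$ (being compact in a Hausdorff space), the closure in $A$ of any subset of $A$ coincides with the closure in $X$. For each $x\in A$ I would pick some index $i(x)\in\{1,\dots,s\}$ with $x\in U_{i(x)}$; regularity of $A$ applied to the closed set $A\setminus U_{i(x)}$ and the point $x$ gives an open-in-$A$ neighborhood $V_x$ of $x$ with $\overline{V_x}\subset U_{i(x)}\cap A$. The family $\{V_x\}_{x\in A}$ covers $A$, so by compactness there exist $x_1,\dots,x_m\in A$ with $A=V_{x_1}\cup\cdots\cup V_{x_m}$. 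I would then set
$$
  A_i\eqdef \bigcup \bigl\{\overline{V_{x_j}} : 1\leq j\leq m,\ i(x_j)=i\bigr\},
$$
with the convention $A_i=\emptyset$ if no such $j$ exists (in which case one may replace $A_i$ by a singleton in $A\cap U_i$ if nonemptiness is desired, or simply keep it empty since the statement only requires $A_i\subset U_i$ and $\bigcup_i A_i=A$). Each $A_i$ is a finite union of compact subsets of $X$, hence compact, contained in $U_i$, and $\bigcup_{i=1}^{s}A_i\supset \bigcup_{j=1}^{m}V_{x_j}=A$, while the reverse inclusion is automatic since each $A_i\subset A$.

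The only place requiring any care is the passage from the Hausdorff hypothesis on $X$ to regularity of $A$: this is precisely where compactness enters in part (2), and this is the one ingredient that should be highlighted. Everything else is a finite covering argument.
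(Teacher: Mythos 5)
Your proof is correct, but for item (2) it follows a different route from the paper's. You argue globally for all $s$ cover elements at once: you pass to the compact Hausdorff (hence normal, hence regular) subspace $A$, shrink each point's neighborhood to one whose closure sits inside some $U_{i(x)}$, extract a finite subcover, and group the closures by index; the only external input is regularity of compact Hausdorff spaces, and your handling of possibly empty pieces $A_i$ is fine since the statement does not demand nonemptiness. The paper instead reduces to a cover by two sets (implicitly by induction), forms the compact sets $K_1=A\setminus U_2$ and $K_2=A\setminus U_1$, separates them by disjoint open sets $V_1\subset U_1$, $V_2\subset U_2$ using item (1) when they are disjoint, and then takes $A_1=A\setminus V_2$, $A_2=A\setminus V_1$; this stays entirely within the lemma's own item (1) and elementary set operations, at the cost of the reduction to $s=2$. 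Your approach buys a one-shot argument for arbitrary $s$ and makes explicit where compactness yields regularity, while the paper's is more self-contained, not invoking normality of compact Hausdorff spaces as a separate fact. For item (1) you spell out the standard two-step separation argument that the paper simply cites to Munkres, so there the content is the same.
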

\begin{proof}
%We will prove the first item.  Since $X$ is a regular Hausdorff
%topological space and $B$ is a closed set of $X$, for each $x\in
%A$ there exists disjoints open sets $V_x$ and $W_x$ such that
%$x\in W_x$ and $B \subset V_x$. By compactness of $A$ we find
%finitely many open sets $V_1,\dots, V_s, W_1,\dots,W_s$ such that
%$A \subset W_{1}\cup \dots \cup W_s$, $B \subset V_i$ and $V_i\cap
%W_i=\emptyset$ for all $i=1,\dots,s$. Set $V$ and $W$ be
%intersection of $V_i$'s and the union of $W_i$'s respectively.
%Hence $V$ and $W$ are disjoint open sets such that $B \subset V$
%and $A \subset W$.
%
%The second item is an application of the first item taking
%$B=\partial U$. Moreover, in this case we have  $W\subset U$. In
%fact, since $A$ and $\partial U$ are both closed sets in $W$ and
%$V$ respectively and $V\cap W = \emptyset$ then we also get that
%$\overline{W} \subset U$ and $\overline{V}\cap A=\emptyset$.
%Finally, we can assume that the closure of $V$ and $W$ are
%disjoints. Otherwise we repeat the above procedure taking now
%$B=\overline{V}$.
%
%To conclude we will prove the last item.
The first conclusion is a well known equivalent definition of a Hausdorff
topological space (see \cite[Lemma 26.4 and Exercice 26.5]{Mu00}).
Hence, we only need to prove the second conclusion. First of all, notice
that it suffices to prove the result for an open cover of $A$ with
two sets. So, let $\{U_1,U_2\}$ be an open cover of $A$. Since
$X$ is Hausdorff, $A$ is a closed subset of $X$. Let us consider
compact subsets $K_1 = A \setminus U_2 \subset U_1$ and $K_2 = A
\setminus U_1 \subset U_2$. If $A \subset K_1\cup K_2$ then we set
$A_1 = K_1$, $A_2 = K_2$ and it is done. Otherwise, $A\setminus
(K_1\cup K_2)$ is a nonempty subset of $A$ and it is easy to see
that $K_1 \cap K_2 = \emptyset$. Since $X$ is Hausdorff and $K_1$
and $K_2$ are compact disjoint subsets of $X$, by the first conclusion,
there are disjoint open subsets $V_i$ of $X$ so that $K_i \subset
V_i$, for $i = 1,2$. We may assume that $V_i \subset U_i$. Now let
us take $A_1 = A\setminus V_2 \subset U_1$ and $A_2 = A\setminus
V_1 \subset U_2$. Then $A_1$ and $A_2$ are compact subsets of $X$
and $A = A_1 \cup A_2$ which concludes the proof. %\qed
%We will prove the first item. Since $X$ is a regular Hausdorff
%topological space and $B$ is a closed set of $X$, for each $x\in
%A$ there exists disjoints open sets $V_x$ and $W_x$ such that
%$x\in W_x$ and $B \subset V_x$. By compactness of $A$ we find
%finitely many open sets $V_1,\dots, V_s, W_1,\dots,W_s$ such that
%$A \subset W_{1}\cup \dots \cup W_s$, $B \subset V_i$ and $V_i\cap
%W_i=\emptyset$ for all $i=1,\dots,s$. Set $V$ and $W$ be
%intersection of $V_i$'s and the union of $W_i$'s respectively.
%Hence $V$ and $W$ are disjoint open sets such that $B \subset V$
%and $A \subset W$.
\end{proof}

Let $A$ and  $A_n$, $n\geq 1$, be compact subsets of $X$.
Following~\cite{B93}, we define the \emph{upper Kuratowski limit}
of $(A_n)_{n}$ as the set
$$
  \Ls A_n \eqdef \bigcap_{m\geq 1} \overline{\bigcup_{n\geq m}
  A_n}.
$$
Observe that $\Ls A_n$ is a closed set and $\Ls A_n \subset B$ that
provide $A_n \subset B$, for $n$ sufficiently large, and $B$ is a
closed set.
%or, equivalently, as the set of points $x\in X$ such that for
%every open neighborhood $U$ of $x$ there exists a subsequence
%$n_k\to \infty$  so that $A_{n_k} \cap U \not= \emptyset$ for all
%$k\geq 1$.
%For any $x\in X$, we denote by $\Gamma(x)$ the $\Gamma$-orbit,
%i.e., the orbit of $x$ under the action of the semigroup $\Gamma$.
On the other hand, we recall that the Vietoris topology in
$\mathscr{H}(X)$ is generated by the basic sets of the form
$$
  O\langle U_1,\dots,U_m \rangle =\{K \in \mathscr{H}(X): K \subset
   \bigcup_{i=1}^m U_i, \  K \cap U_i \not= \emptyset \ \text{for $k=1,\dots,m$}
  \}
$$
where $U_1,\dots U_m$ are open sets in $X$ and $m\in\mathbb{N}$.
Hence, if $A_n\to A$ in the Vietoris topology then $A_n \in
O\langle U\rangle$ for any $n$ large enough and any open set $U$
in $X$ such that $A \subset U$. In particular $A_n\subset U$ for
all $n$ sufficiently large. Moreover we have the following:

\begin{lem}
\label{lem-conv-Vietoris}  $A_n \to A$ in the Vietoris topology if
and only if
%for every open set $U$ in $X$ such that $A\subset U$,
%there exists $n_0\in \mathbb{N}$ such that
%$$
%   A \subset \overline{\{x\in A_n: \ \text{for some} \ n\geq n_0\}}
%   %= \overline{\bigcup_{n\geq n_0} A_n}
%   \subset U.
%$$
%$$
%A \subset \bigcap_{m\geq 1} \overline{\bigcup_{n\geq m} A_n}
%\subset \overline{\bigcup_{n\geq n_0} A_n} \subset U
%$$
for any pair of open sets $U$ and $V$ such that $A\subset U$ and
$A\cap V \not=\emptyset$, there is $n_0\in\mathbb{N}$ so that
$$
\bigcup_{n\geq n_0} A_n \subset U \quad \text{and} \quad V\cap A_n
\not=\emptyset \ \ \text{for all $n\geq n_0$}.
$$
In particular,
\begin{equation} \label{eq:conteudo}
 A = \Ls A_n \eqdef \bigcap_{m\geq 1} \overline{\bigcup_{n\geq m} A_n}. % \subset \overline{\bigcup_{n\geq n_0} A_n}.
\end{equation}
\end{lem}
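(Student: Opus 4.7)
The plan is to prove the equivalence by comparing $A_n\to A$ directly against the basic Vietoris neighborhoods of $A$, and then to read off the identity $A=\bigcap_{m\geq 1}\overline{\bigcup_{n\geq m}A_n}$ from the characterization just established.

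For the forward implication I would exploit two particular sub-families of basic open sets. Given an open $U\supset A$, the set $O\langle U\rangle=\{K\in\mathscr{H}(X):K\subset U\}$ is a Vietoris neighborhood of $A$, so $A_n\in O\langle U\rangle$ eventually, i.e., $\bigcup_{n\geq n_0}A_n\subset U$. Given an open $V$ with $A\cap V\neq\emptyset$, the set $O\langle X,V\rangle=\{K\in\mathscr{H}(X):K\cap V\neq\emptyset\}$ is a Vietoris neighborhood of $A$, so $A_n\cap V\neq\emptyset$ eventually. Taking the maximum of the two threshold indices delivers the required $n_0$.

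For the backward implication I would take an arbitrary basic Vietoris neighborhood $O\langle U_1,\dots,U_m\rangle$ of $A$, set $U=U_1\cup\cdots\cup U_m$, and note that $A\subset U$ and $A\cap U_i\neq\emptyset$ for each $i$. Applying the hypothesis once to $U$ and once to each $V=U_i$ and taking the maximum of the resulting $m+1$ thresholds yields a common $n_0$ after which $A_n\subset U$ and $A_n\cap U_i\neq\emptyset$ for every $i$, which is exactly $A_n\in O\langle U_1,\dots,U_m\rangle$, so $A_n\to A$ in the Vietoris topology.

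Finally, for the identity \eqref{eq:conteudo}, the inclusion ``$\subset$'' is almost tautological: if $a\in A$, every open neighborhood $V$ of $a$ meets $A$, so the characterization forces $V\cap A_n\neq\emptyset$ for all large $n$, and thus $a\in\overline{\bigcup_{n\geq m}A_n}$ for every $m$. For the reverse inclusion I would use that the compact sets $\{a\}$ and $A$ are disjoint in the Hausdorff space $X$ and invoke Lemma~\ref{lem-topologia-X}(1) to produce disjoint open neighborhoods $W\ni a$ and $U\supset A$; the characterization applied to $U$ then forces $\bigcup_{n\geq n_0}A_n\subset X\setminus W$, and since $X\setminus W$ is closed, $a$ is not in the closure of any tail union. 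The only real obstacle is recognizing that the single-open-set neighborhoods $O\langle U\rangle$ and $O\langle X,V\rangle$ decouple the ``upper'' (containment) and ``lower'' (hitting) halves of Vietoris convergence; once that is in hand the rest is elementary, with Lemma~\ref{lem-topologia-X}(1) doing the only nontrivial work.
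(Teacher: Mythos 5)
Your proof is correct and follows essentially the same route as the paper: checking $A_n\to A$ directly against basic Vietoris neighborhoods for the equivalence, and separating a point outside $A$ from $A$ by Lemma~\ref{lem-topologia-X}(1) to get the reverse inclusion in \eqref{eq:conteudo}. The only (harmless) deviation is in the hitting half of the forward implication, where you use the neighborhood $O\langle X,V\rangle$ while the paper constructs $O\langle V,U_1,\dots,U_s\rangle$ from a finite open cover of $A$; your choice slightly streamlines that step by avoiding the compactness argument.
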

\begin{proof}
Assume that $A_n\to A$ in the Vietoris topology. Let $U$ be any
open set such that $A\subset U$. By applying the above observation,
there is $n_0\in\mathbb{N}$ such that $A_n \subset U$ for all
$n\geq n_0$. Now we will see that for any open set $V$ with $A\cap
V \not=\emptyset$, it holds that $A_n \cap V\not =\emptyset$ for
all $n$ sufficiently large. By the compactness of $A$, we extract
open sets $U_1,\dots, U_s$ in $X$ such that
$$
\text{$A\cap U_i \not =\emptyset$ \quad and \quad  $A \subset
V\cup U_1\cup \dots \cup U_s$.}
$$
Hence $O\langle V, U_1,\dots,U_s\rangle$ is
an open neighborhood of $A$ in $\mathscr{H}(X)$. Since $A_n$
converges to $A$  then $A_n\in O\langle V, U_1,\dots,U_s\rangle$
for all $n$ large enough and in particular $A_n \cap V\not
=\emptyset$ for all $n$ large. \vspace{0.1cm}

We will prove the converse. Let $O\langle U_1,\dots,U_m \rangle$
be an basic open neighborhood of $A$. Thus, $U_1,\dots, U_m$ are
open sets in $X$ and
$$
A\subset U_1 \cup \dots \cup U_m \eqdef U \quad \text{and} \quad
A\cap U_i \not= \emptyset \ \ \text{for all $i=1,\dots,m$.}
$$
By assumption, there is $n_0$ such that $A_n \subset U$ for all
$n\geq n_0$. Moreover, since $A\cap U_i \not=\emptyset$, also we
get  $n_i$ such that $A_n \cap U_i \not=\emptyset$ for all $n\geq
n_i$ and $i=1,\dots,m$. Therefore
$$
\text{$A_n \in O\langle U_1,\dots,U_m \rangle$ \ \ for all $n\geq
N=\max\{n_i: i=0,\dots,m\}$.}
$$
This implies that $A_n \to A$ in
the Vietoris topology. \vspace{0.1cm}

Finally we will prove~\eqref{eq:conteudo}. We have that $A\subset
\Ls A_n$ since for every open neighborhood $V$ of any point in $A$
there is $n_0\in\mathbb{N}$ such that
$$
\text{$A_n\cap V \not=\emptyset$ \ \ for all $n\geq n_0$.}
$$
Reverse
content is equivalent to prove that for every compact set $K$ such
that $K \cap A=\emptyset$, there exists $n_0\in \mathbb{N}$ so
that $A_n \cap K=\emptyset$ for all $n\geq n_0$. But this is a
consequence again of Lemma~\ref{lem-topologia-X}. Indeed, since
$K$ and $A$ are compact sets, we can find disjoint open sets $U$
and $V$ such that $A \subset U$ and $K\subset V$. By the above
characterization of the Vietoris convergence, there is
$n_0\in\mathbb{N}$ such that $A_n \subset U$ for all $n\geq n_0$.
In particular $A_n\cap K=\emptyset$ for all $n\geq n_0$. \qed
\end{proof}

%In what follows, we assume that we work with an IFS of continuous
%maps on $X$ and we hold the notation introduced in the
%introduction.

\subsection{Proof of Theorem~\ref{thmA-attractor}
and Conclusions~\ref{item1-thmA-chaos-game}-\ref{item2-thmA-chaos-game}
of Theorem~\ref{thmA-chaos-game}}
 We start proving Conclusion~\ref{item1-thmA-chaos-game} of
Theorem~\ref{thmA-chaos-game}.

\begin{pro}
\label{lem-compact-orbit} Let $A$ be a compact subset of $X$. If
$x \in \mathcal{B}^*_p(A)$ then both, $A$ and
$\overline{\Gamma(x)} $, are forward invariant compact sets such
that
$$
     A= \Ls F^n(\{x\})\eqdef \bigcap_{m\geq 1} \overline{\bigcup_{n\geq m} F^n(\{x\})}
     \quad \text{and} \quad \overline{\Gamma(x)}=\bigcup_{n\geq 1} F^n(\{x\}) \cup
     A.
$$
In particular,
$$
    \overline{\{f^m_\omega(x): m\geq n\}} \ \text{is a compact set
    for all $n\in\mathbb{N}$ and  $\omega\in \Omega$.}
$$
\end{pro}
\vspace{0.01cm}
\begin{proof}
Set $K\eqdef \overline{\Gamma(x)}$. Since $x\in
\mathcal{B}_p^*(A)$, by definition it follows the above
characterization of $A$, and consequently of $K$. \vspace{0.01cm}
\vspace{0.1cm}

Now we will show that $K$ is compact. Let $\{U_\alpha: \alpha \in
I\}$ be an open cover of $K$. Since $A\subset K$, by the
compactness of $A$ there exists a finite subset $J_1$ of $I$ such
that
$$
    A \subset \bigcup_{\alpha \in J_1} U_\alpha \eqdef U.
$$
Again, by the above characterization of the set $A$ and since
\begin{equation*}
\label{eq:Bm} \overline{\bigcup_{n\geq m} F^n(\{x\})} \quad
\text{for $m\geq 1$}
\end{equation*}
is a nested sequence then there is $n_0\in \mathbb{N}$ such that
the union of $F^n(\{x\})$ for $n\geq n_0$ is contained in $U$. On
the other hand, the set $F(\{x\})\cup \dots \cup F^{n_0-1}(\{x\})$
is a finite union of compact sets and thus, it is compact. Hence,
there is a finite subset $J_2$ of $I$ such that
$$
   F(\{x\})\cup
\dots \cup F^{n_0-1}(\{x\})  \subset \bigcup_{\alpha \in J_2}
U_\alpha.
$$
Put together all and setting $J=J_1\cup J_2$ we get that
$$
   K=\overline{\Gamma(x)}
   %=\overline{\bigcup_{n\geq 1} F^n(\{x\})}
   = A\cup F(\{x\})\cup
\dots \cup F^{n_0-1}(\{x\}) \cup \bigcup_{n\geq n_0} F^n(\{x\})
\subset \bigcup_{\alpha \in J} U_\alpha
$$
concluding that $K$ is compact. \vspace{0.1cm}

Moreover clearly $F(K)\subset K$. Thus we have obtained that $K$
is a compact Hausdorff topological space so that $F(K)\subset K$
and $A\subset K$. Hence, we can restrict the map $F$ to the set of
non-empty compact subsets of $K$.

According
to~\cite[Prop.~1.5.3~(iv)]{K02}, see also~\cite{BL15}, the
Hutchinson operator $F:\mathscr{K}(K)\to \mathscr{K}(K)$ is
continuous and from the above characterization of the set $A$, it
is easy to conclude that $A$ is also a forward invariant compact
set. This completes the proof of the proposition. \qed
\end{proof}
%\begin{rem}
%\label{rem-attractor} With the same proof one shows that if $A$ is
%a strict attractor and $S\subset\mathcal{B}(A)$ is compact then
%the closure of $\Gamma(S)$ is also a compact set.
%\end{rem}

Now, we characterize the quasi-attractors
(Conclusions~\ref{item1-thmA-attractor}-\ref{item1-thmA-attractor3}
of Theorem~\ref{thmA-attractor}).
\begin{pro}
\label{pro:self-similar} Let $A$ be a compact subset of $X$. Then
\begin{enumerate}[topsep=0.1cm,itemsep=0.1cm]
\item $\mathcal{B}_p(A) \subset \mathcal{B}_p^*(A)$;
\item $A$  is a quasi-attractor if and only if $A\subset
\mathcal{B}^*_p(A)$. Moreover, in this case,
\begin{gather*}
  \Ls F^n(K)=A \quad \text{for all non-empty compact set $K\subset
  A$ \  and } \\
\text{$\overline{\Gamma(x)}\subset \mathcal{B}^*_p(A)$ \ \ for all
$x\in\mathcal{B}^*_p(A)$;}
\end{gather*}
\item if $A$ is a pointwise attractor, it is a
quasi-attractor and $\mathcal{B}_p(A) = \mathcal{B}_p^*(A)$.
\end{enumerate}
\end{pro}

\begin{proof}
The first conclusion follows from the
characterization~\eqref{eq:conteudo} of the limit of $F^n(\{x\})$
in the Vietoris topology given in Lemma~\ref{lem-conv-Vietoris}.
\vspace{0.1cm}

Assume that $A$ is a quasi-attractor and let $x\in A$. We want to
prove that $A=\mathrm{Ls}\, F^n(\{x\})$. Since $F^n(\{x\})\subset
A$ for all $n\geq 1$ and $A$ is a closed set then $\Ls F^n(\{x\})
\subset A$. As in Proposition~\ref{lem-compact-orbit}, $\Ls
F^{n}(\{x\})$ is a forward invariant closed set and thus, by the
minimality of $A$,  $\mathrm{Ls}\, F^n(\{x\})=A$ and  $x\in
\mathcal{B}^*_p(A)$. Moreover, the same argument also proves that
$\Ls F^n(K)=A$ for all non-empty compact set $K\subset A$. In
fact, considering that Proposition~\ref{lem-compact-orbit} implies
that
$$
\overline{\Gamma(y)}= \Gamma(y)\cup A \quad \text{for all $y\in
\mathcal{B}^*_p(A)$},
$$
for all $z\in \overline{\Gamma(y)}$ it holds that if $z\in A$ then
$\Ls F^n(\{z\})=A$ and if $z\in \Gamma(y)$ then $\Ls F^n(\{z\})
\subset \Ls F^n(\{y\})=A$ and from the above arguments $\Ls
F^n(\{z\})=A$. Therefore the closure of $\Gamma(y)$ is contained
in $\subset \mathcal{B}_p^*(A)$ for all $y\in \mathcal{B}^*_p(A)$.

%forward minimal self-similar compact set. Hence for every $x\in
%A$, the closure of $\Gamma(x)$ is equal to $A$. On the other hand,
%for any basic neighborhood $O\langle U_1,\dots, U_s\rangle$ of $A$
%in $\mathscr{H}(X)$  we have that
%$$
%F^n(\{x\}) \subset A \subset U_1\cup \dots\cup U_s \quad \text{for
%all $n\geq 1$,}
%$$
%and, by the density of $\Gamma(x)$,
%$F^n(\{x\}) \cap U_i \not=\emptyset$ for any $n$ large enough and
%$i=1,\dots,s$. Then $F^n(\{x\})\in O\langle U_1,\dots, U_s\rangle$
%for any $n$ sufficiently large and thus
%$$
%F^n(\{x\}) \to A \quad \text{in the Vietoris topology.}
%$$
%Therefore $x\in \mathcal{B}_p(A)$.%, i.e, $A\subset \mathcal{B}_p(A)$.
%

\vspace{0.1cm}
 Suppose now that $A\subset \mathcal{B}^*_p(A)$. Hence
$A=\mathrm{Ls}\, F^n(\{x\})$ for all $x\in A$. In particular, $A$
is a forward minimal set and by
Proposition~\ref{lem-compact-orbit} also is a forward invariant
set. Thus $A$ is a forward invariant minimal set, that is, a
quasi-attractor.

\vspace{0.1cm} Finally we will proof the last conclusion. By the
first conclusion, it suffices to show that if $A$ is an strict
attractor then $\mathcal{B}^*_p(A)\subset \mathcal{B}_p(A)$. To
accomplish this, let us consider $x\in \mathcal{B}^*_p(A)$ and
open sets $U$, $V$ such that $A\subset U$ and $A\cap
V\not=\emptyset$. Being $A$ a pointwise attractor there is an open
neighborhood $W$ of $A$ so that $F(\{z\})\to A$ in the Vietoris
topology for all $z\in W$. Without loss of generality, we can
assume that $U\subset W$. Since $A=\mathrm{Ls} F^n(\{x\})$ then
there exists $n_1\in \mathbb{N}$ such that $F^n(\{x\})\subset U$
 for all $n\geq n_1$.
Then, for every $z\in F^{n_1}(\{x\})$ we have that $F^n(\{z\})$
converges to $A$ in the Vietoris topology and thus, by
Lemma~\ref{lem-conv-Vietoris}, there is $n_2=n_2(z)\in \mathbb{N}$
so that
$$ \text{$F^{n}(\{z\})\cap V \not = \emptyset$ \ \ for all $n\geq
n_2$.}
$$
Taking $n_0=\max\{n_2(z): z\in F^{n_1}(\{x\})\}$ we get that
$F^n(\{x\})\cap V\not=\emptyset$ for all $n\geq n_0$ and
therefore, Lemma~\ref{lem-conv-Vietoris} implies that $x\in
\mathcal{B}_p(A)$  \qed
%Assume that the IFS restricts to $A$ is not forward minimal. Then,
%there exists a non-empty compact set $B \subset  A$ such that
%$B\not=A$ and $F(B)\subset B$. Hence the orbit of any point $x$ in
%$B$ belongs to $B$ and then the clausure of $\Gamma(B)$ is
%contained in $B$. On the other hand, since $x\in \mathcal{B}_p(A)$
%by Lemma~\ref{lem-compact-orbit} we get that $A\subset
%\overline{\Gamma(x)}$ and thus $A\subset B$. But this is
%impossible because $B\subset A$ and $B\not=A$.
\end{proof}

\vspace{0.2cm} We complete the proof of
Theorem~\ref{thmA-attractor} by proving
Conclusion~\ref{item2-thmA-attractor}.

\begin{pro}
\label{prop-attractor} %Every attractor $A$ is a pointwise attractor and $\mathcal{B}(A)=\mathcal{B}_p(A)$.
If $A$ is a strict attractor, then
\begin{enumerate}
\item $F^n(K)\to A$ in the Vietoris topology for all compact sets $K \subset
\mathcal{B}(A)$;
\item $A$ is a pointwise attractor and $\mathcal{B}(A)=\mathcal{B}_p(A)$.
\end{enumerate}

\end{pro}
\begin{proof}
The first conclusion is a consequence of Lemma~\ref{lem-topologia-X}.
Indeed, given any compact set $K$ in $\mathcal{B}(A)$, by
compactness we can find open neighborhoods $U_1,\dots, U_s$ of $A$
such that $K\subset U_1\cup\dots\cup U_s$ and  $F^n(S)\to A$ for
any compact set $S$ in $U_i$, for all $i=1,\dots,s$. By
Lemma~\ref{lem-topologia-X} there are compact sets $K_i \subset
U_i$ for $i=1,\dots,s$ such that $K=K_1\cup \dots \cup K_s$. Then,
$$F^n(K)=F^n(K_1)\cup \dots \cup F^n(K_s)$$ and thus $F^n(K)$
converges to $A$ in the Vietoris topology.

\vspace{0.1cm} We will prove the second conclusion. By means of the
first conclusion, $\mathcal{B}(A)\subset \mathcal{B}_p(A)$. Thus, since
$\mathcal{B}(A)$ is an open set containing $A$ we get that $A$ is
a pointwise attractor. To conclude, we will show that
$\mathcal{B}_p(A) \subset \mathcal{B}(A)$. Given $x\in
\mathcal{B}_p(A)$ we want to prove that $x$ belongs to
$\mathcal{B}(A)$.
\renewcommand{\theclaim}{\arabic{section}.\arabic{thm}.\arabic{claim}}
\begin{claim}
\label{claim1} If there exists a neighborhood $V$ of $x$ such that
$F^n(K)\to A$ in the Vietoris topology for all non-empty compact
sets $K \subset V$ then $x\in \mathcal{B}(A)$.
\end{claim}
\begin{proof}
Since $A$ is an attractor there exists a neighborhood $U_0$ of $A$
such that $F^n(S) \to A$ for all compact sets $S$ in $U_0$. Take
$U= U_0 \cup V$. Clearly, $U$ is a neighborhood of $A$ and $x\in
U$. On the other hand, by Lemma~\ref{lem-topologia-X}, any compact
set $K$ in $U$ can be written as the union of two compact sets
$K_0$ and $K_1$ contained in $U_0$ and $V$ respectively. Now,
since $ F^n(K)=F^n(K_0)\cup F^n(K_1) $ it follows that $F^n(K)$
converges to $A$ for all non-empty compact sets $K$ in the
neighborhood $U$ of $A$. This implies that $x\in \mathcal{B}(A)$.
\qed
\end{proof}
Now, we will get a neighborhood $V$ of $x$ in the assumptions of
the above claim. Since $\mathcal{B}_p(A)$ is an open neighborhood
of $A$ and $x\in \mathcal{B}(A)$, we get $m \in \mathbb{N}$ such
that $F^m(\{x\})\subset \mathcal{B}(A)$. Equivalently,
$$
f_{\omega_m} \circ \cdots \circ f_{\omega_1}(x) \in \mathcal{B}(A)
\ \ \text{for all $\omega_i \in \{1,\dots,k\}$ for $i=1,\dots,m$}.
$$
By the continuity of the generators $f_1,\dots,f_k$ of the IFS, we
get an open set $V$ such that $x\in V$ and
$$
\text{$f_{\omega_m} \circ \cdots \circ f_{\omega_1}(V) \subset
\mathcal{B}(A)$ \ \ for all $\omega_i \in \{1,\dots,k\}$  for
$i=1,\dots,m$.}
$$
In particular, for every compact set $K$ in $V$
it holds that $F^m(K) \subset \mathcal{B}(A)$ and thus, by the
first conclusion, $F^{n}(K)$ converges to $A$. Finally
Claim~\ref{claim1} implies that $x \in \mathcal{B}(A)$ as we
wanted to show. This completes the proof. \qed
\end{proof}

To end this section we will prove Conclusion~\ref{item2-thmA-chaos-game}
of Theorem~\ref{thmA-chaos-game}.

\begin{pro}
Consider  $\omega \in \Omega$ and $x\in \mathcal{B}^*_p(A)$. Then
the following statements are equivalent:
\begin{enumerate}[topsep=0.2cm,itemsep=0.2cm]
\item \label{1} $A\subset \overline{O^+_\omega(x)};$ %if and only if
%if either of the two following equivalent conditions holds
\item \label{2}
$\displaystyle \lim_{n\to\infty} \overline{\{f^m_\omega(x): m\geq
n\}} = A \ \ \text{in the Vietoris topology;} $
\item \label{3}% and if and only if
$\displaystyle
         A=\bigcap_{n\geq 1} \overline{\{f^m_\omega(x): m\geq
         n\}}.
$
\end{enumerate}
\end{pro}
\begin{proof}
According to Proposition~\ref{lem-compact-orbit},
$A_n=\overline{\{f_\omega^m(x): m\geq n\}}$ is a compact set for
all $n\in\mathbb{N}$. Moreover, $A_{n+1} \subset A_n$ and hence,
by Lemma~\ref{lem-conv-Vietoris}, if  $A_n\to A$ in the Vietoris
topology,
$$
     A= \bigcap_{n\geq 1} A_n \subset A_1 =
     \overline{O_\omega^+(x)}.
$$
This proves~\ref{2} implies~\ref{1}.

Reciprocally, let $U$ and $V$ be open sets such that $A\subset U$
and $V\cap A \not=\emptyset$. Since $x\in \mathcal{B}^*_p(A)$ then
$A=\mathrm{Ls}\,F^n(\{x\})$ and thus there exists $n_0\in
\mathbb{N}$ such that
$$
\bigcup_{n\geq n_0} F^n(\{x\})\subset U.
$$
In particular, the  union of $A_n$ for $n\geq n_0$ is contained in
$U$. Moreover, since $A\subset \overline{O_\omega^+(x)}$ we have
$A_n\cap V\not=\emptyset$ for all $n$ large enough.
Lemma~\ref{lem-conv-Vietoris} implies that $A_n \to A$ in the
Vietoris topology completing the proof of \ref{1} implies \ref{2}.

Finally, by Lemma~\ref{lem-conv-Vietoris}, we have that \ref{2}
implies \ref{3} and easily one can see that~\ref{3}
implies~\ref{1} concluding the proof of the proposition \qed
\end{proof}
%\section{Probabilistic chaos game}
%\label{sec3}
%\subsection{Proof of Theorem~\ref{thmA}}
%\newpage

\section{Deterministic chaos game}

\subsection{Equivalence}
\label{sec41} We will conclude Theorem~\ref{thmA-chaos-game}
proving Conclusion~\ref{item3-thmA-chaos-game}.
%The following result proves the last item of this theorem.

\begin{pro}
Let $A$ be a quasi-attractor. Then the following statements are
equivalent:
\begin{enumerate}[itemsep=0.1cm]
\item \label{item-one_seq} there exists $\omega\in \Omega$ such that $A \subset
\overline{O^+_\omega(x)}$ for all $x\in \mathcal{B}^*_p(A)$;
%\item there exists $\omega\in \Omega$ such that for every $x\in
%\mathcal{B}(A)$ it holds that
%$$
%  \lim_{n\to\infty} \overline{\{f^m_\omega(x): m\geq n\}}=A \ \
%  \text{in the Vietoris topology;}
%$$
\item \label{item-disj_seq} $A\subset \overline{O^+_\omega(x)}$ for all disjunctive sequences
$\omega\in\Omega$ and $x\in \mathcal{B}^*_p(A)$;
\item there is $\Omega_0 \subset \Omega$ with $\mathbb{P}(\Omega_0)=1$ such that
$$
%     \lim_{m\to\infty}  \{f^n_\omega(x): n\geq m\} = A
     A \subset \overline{O^+_\omega(x)}
     \quad \text{for all
      $\omega \in \Omega_0$  and
     $x\in \mathcal{B}^*_p(A)$.}
$$
\end{enumerate}
\end{pro}
\begin{proof}
It suffices to show that~\ref{item-one_seq}
implies~\ref{item-disj_seq}. Let $x$ be a point in
$\mathcal{B}^*_p(A)$. According to
Proposition~\ref{lem-compact-orbit}, $$K\eqdef
\overline{\Gamma(x)} \subset \mathcal{B}^*_p(A)$$ and it is a
forward invariant compact set.

The following claim will be useful to prove the density of
disjunctive fiberwise orbits, i.e, of fiberwise orbits driving by
disjunctive sequences: \setcounter{claim}{0}
\begin{claim}
\label{claim-desisdad-disj} Let $Z$ be a forward invariant set
such that $A\subset Z$. If for any non-empty open set $I\subset X$
with $A\cap I \not =\emptyset$, there is $f_{i_s}\circ \cdots
\circ f_{i_1} \in \Gamma$ such that
\begin{equation*}
\label{eq:1} \text{for each $z\in Z$ there is $t\in \{1,\dots,s\}
$ so that $f_{i_t}\circ \cdots \circ f_{i_1}(z)\in I$}
\end{equation*}
then
$$
A \subset \overline{O^+_\omega(x)} \quad \text{for all disjunctive
sequences $\omega\in \Omega$ and $x\in Z$.}
$$
%Here $f_{i_t}\circ \dots \circ f_{i_1}$  for $t=0$ denotes the identity map.
\end{claim}
\begin{proof}
Consider any open set $I$ such that $A\cap I \not = \emptyset$,
$x\in Z$ and a disjunctive sequence $\omega\in \Omega$. Using the
fact that $\omega$ is a disjunctive sequence and that $Z$ is a
forward invariant set we can choose $m\geq 1$ such that
$$
[\sigma^m(\omega)]_j = i_j \ \ \text{for $j = 1,\dots, s$  \ \ and
\ \ $z=f^{m}_\omega(x)\in Z$.}
$$
Hence, by assumption, there exists $t=t(z)$ such one has that
$f^{m+t(z)}_\omega(x) \in I$ which proves the density on $A$ of
the $\omega$-fiberwise orbit of $x$. \qed
\end{proof}

Notice that $F(K)\subset K$ and hence we can take as $Z=K$ in the
above claim. Let $I$ be an open set so that $I\cap
A\not=\emptyset$. By assumption, since $Z\subset
\mathcal{B}_p(A)$, there exists a sequence $\omega\in \Omega$ such
that for each point $z \in Z$ the $\omega$-fiberwise orbit of $z$
is dense in $A$. In particular, there is $n=n(z)\in \mathbb{N}$
such that
$$\{f^m_\omega(z): m\leq n\}\cap I \not=\emptyset.$$
 By
continuity of the generators $f_1,\dots,f_k$ of the IFS, there
exists an open neighborhood $V_z$ of $z$ such that
$$\text{$\{f^m_\omega(y): m\leq n\}\cap I\not=\emptyset$ \ \ for all $y\in
V_z$.}
$$
Then, the compactness of $Z$ implies that we can extract
open sets $V_1, \dots, V_r$ and positive integers $n_1,\dots, n_r$
such that $Z\subset V_1\cup \dots \cup V_r$ and
$$
\text{$\{f^{m}_\omega(z): m\leq n_i\}\cap I \not=\emptyset$ \ \
for all $z\in V_i$ and $i=1,\dots,r$.}
$$
Hence the assumptions of Claim~\ref{claim-desisdad-disj} hold
taking $f_{\omega_s}\circ \cdots \circ f_{\omega_1}\in \Gamma$
where $s=\max\{n_i: i=1,\dots,r\}$. Therefore, since the initial
point $x \in \mathcal{B}^*_p(A)$ belongs to $Z$, we conclude that
any disjunctive fiberwise orbit of $x$  is dense in $A$ that
completes the proof. \qed
\end{proof}

\subsection{Necessary condition}
\label{sec42}
We will prove Theorem~\ref{thmB}.

\begin{proof}[Proof of Theorem~\ref{thmB}]
Clearly if there is a minimal orbital branch, i.e.,
$\omega=\omega_1\omega_2\dots \in \Omega$ such that
$O^+_\omega(x)$ is dense for all $x$, then the IFS is forward
minimal.

We will assume that it is not backward minimal. Then, there exists
a non-empty closed set $K \subset X$ such that $\emptyset\not=
f^{-1}(K) \subset K \not= X$ for all $f\in \Gamma$. We can
consider
$$
K^-_n = \bigcap_{i=1}^n f^{-1}_{\omega_1}\circ\cdots\circ
f_{\omega_i}^{-1}(K)=f^{-1}_{\omega_1}\circ\cdots\circ
f_{\omega_n}^{-1}(K) \quad \text{and} \quad K^-_\omega=
\bigcap_{n=1}^\infty K^-_n.
$$
Hence $K^-_n$ is a nested sequence of closed sets. By assumption
of this theorem, the space $X$, where the IFS is defined, is a
compact Hausdorff topological space. As a consequence,
$K^-_\omega$ is not empty and then for every $x\in K^-_\omega$ we
have that $O^+_\omega(x) \subset K$. Since $K$ is not equal to $X$
it follows that there exists a point $x\in X$ so that the
$\omega$-fiberwise orbit of $x$ is not dense. But this is a
contradiction and we conclude the proof. \qed
\end{proof}

 As in the introduction we notified, an IFS is
forward minimal if and only if every point has dense
$\Gamma$-orbit. To complete the section we want to point out the
following straightforward similar equivalent definition of
backward minimality.

\begin{lem}
Consider an IFS of surjective continuous maps of a topological
space $X$.
%Then,
%\begin{enumerate}
%\item $B$ is forward invariant if and only if
%$B\subset \overline{\Gamma(x)}$ for all $x\in B$,
%\item
%and assume that the generators are surjective maps.
Then the IFS is backward minimal if and only if $
X=\overline{\Gamma^{-1}(x)}$ for all $x\in X$ where
$$
      \Gamma^{-1}(x) \eqdef \{ y\in X: \text{there exists} \ g\in \Gamma \
\text{such that} \ g(y)=x\}.$$
\end{lem}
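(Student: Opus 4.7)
The plan is to prove the two implications separately. The easy direction $(\Leftarrow)$ comes by iterating the defining backward invariance along $\Gamma^{-1}$; the harder direction $(\Rightarrow)$ I would handle by showing that $K:=\overline{\Gamma^{-1}(x)}$ is itself a non-empty closed backward invariant set, so that backward minimality forces $K=X$.

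For $(\Leftarrow)$, assume $\overline{\Gamma^{-1}(x)}=X$ for every $x\in X$ and let $A$ be any non-empty closed backward invariant set. Pick $x_0\in A$. For each $y\in\Gamma^{-1}(x_0)$ there is $g\in\Gamma$ with $g(y)=x_0\in A$, so $y\in g^{-1}(A)\subset A$; hence $\Gamma^{-1}(x_0)\subset A$. Since $A$ is closed and by hypothesis $\overline{\Gamma^{-1}(x_0)}=X$, we conclude $A=X$, proving backward minimality.

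For $(\Rightarrow)$, fix $x\in X$ and set $K=\overline{\Gamma^{-1}(x)}$. The set $K$ is closed by definition and non-empty, since surjectivity of any generator $f_i$ gives $\emptyset\neq f_i^{-1}(\{x\})\subset\Gamma^{-1}(x)\subset K$. It remains to verify backward invariance, namely $\emptyset\neq f^{-1}(K)\subset K$ for every $f\in\Gamma$. Non-emptiness is immediate from surjectivity of $f$ as a composition of surjective generators. For the inclusion I would first observe the set-theoretic version $f^{-1}(\Gamma^{-1}(x))\subset\Gamma^{-1}(x)$: indeed, if $f(z)\in\Gamma^{-1}(x)$ then $g(f(z))=x$ for some $g\in\Gamma$, so $(g\circ f)(z)=x$ and $z\in\Gamma^{-1}(x)$. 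Once backward invariance of $K$ is in hand, backward minimality yields $K=X$.

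The main obstacle I anticipate is precisely the passage from $f^{-1}(\Gamma^{-1}(x))\subset\Gamma^{-1}(x)$ to $f^{-1}(K)\subset K$, since taking preimages does not commute with topological closure in general, equality holding if and only if $f$ is open. To overcome this I would argue dually, showing that $U=X\setminus K$ is forward invariant in the sense that $f(U)\subset U$ for every $f\in\Gamma$. Given $y\in U$, an open neighborhood $W\ni y$ disjoint from $\Gamma^{-1}(x)$, and $f\in\Gamma$, I would seek an open neighborhood $V$ of $f(y)$ satisfying $f^{-1}(V)\subset W$. Any $v\in V\cap\Gamma^{-1}(x)$ would then yield $w\in f^{-1}(v)\subset W$ with $(g\circ f)(w)=x$ for some $g\in\Gamma$, contradicting $W\cap\Gamma^{-1}(x)=\emptyset$. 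Producing such a $V$ is delicate without further hypotheses---it is automatic when the generators are open, as in the homeomorphism-of-the-circle case driving this paper's main applications---and in general one would need to exploit the specific structure of $\Gamma^{-1}(x)$ as a countable increasing union of the closed sets $\Gamma^{-n}(x)$.
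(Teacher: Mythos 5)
Your left-to-right implication (backward minimality from density of all $\Gamma^{-1}(x)$) is correct and complete: for $x_0$ in a non-empty closed backward invariant $A$, the inclusion $\Gamma^{-1}(x_0)\subset A$ follows exactly as you say, and density finishes it. Note for the record that the paper offers no proof of this lemma at all (it is presented as a ``straightforward'' remark), so there is nothing to compare against on that side.

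The problem is the converse, and you have correctly located but not closed the gap: everything hinges on showing that $K=\overline{\Gamma^{-1}(x)}$ is backward invariant (equivalently that $U=X\setminus K$ satisfies $f(U)\subset U$), and the set-theoretic invariance $f^{-1}(\Gamma^{-1}(x))\subset\Gamma^{-1}(x)$ does not pass to the closure for maps that are merely continuous and surjective. Your proposed repair does not work as stated: an open neighborhood $V$ of $f(y)$ with $f^{-1}(V)\subset W$ can only exist if the \emph{entire} fibre $f^{-1}(f(y))$ lies in $W$, and there is no reason for that --- $y\in W$, but $f(y)$ may have other preimages inside $\overline{\Gamma^{-1}(x)}$, in which case every neighborhood $V$ of $f(y)$ has $f^{-1}(V)\not\subset W$; even when the fibre condition holds one still needs something like properness or closedness of $f$ to produce $V$. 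So your argument proves the lemma only under the extra hypothesis that the generators are open maps (in particular for the invertible IFSs on the circle that the paper actually uses), not under the stated hypothesis of surjective continuous maps; the closing remark about writing $\Gamma^{-1}(x)$ as a countable union of the closed sets $\bigcup_{|g|\le n}g^{-1}(x)$ is not developed into an argument and does not by itself resolve the closure-versus-preimage obstruction. As it stands, the right-to-left half of the lemma in its full generality remains unproved in your proposal, and this is precisely where the content of the statement lies; either supply a genuinely different argument for non-open surjections or state the extra openness/invertibility hypothesis explicitly.
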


%\begin{proof}
%Let $K$ be compact backward invariant set in $A$. That is,
%$f_i^{-1}(K)\subset K$ for all $i=1,\dots,k$. Given any sequence
%$\omega\in\Omega$, consider
%$$
%K_n = \bigcap_{i=1}^n f^{-1}_{\omega_i}\circ\dots\circ
%f_{\omega_n}^{-1}(K)=f^{-1}_{\omega_1}\circ\dots\circ
%f_{\omega_n}^{-1}(K) \quad \text{and} \quad K_\omega=
%\bigcap_{n=1}^\infty K_n.
%$$
%Since $A$ is compact, $K_\omega$ is not empty and then for every
%$x\in K_\omega$ we have that $O^+_\omega(x) \subset K$.
%
%Assume that the action of $\Gamma$ is not backward minimal. Then
%we can take $K$ different from $A$. Thus, from the above, for
%every $\omega \in \Omega$ we find a point $x\in A$ such that
%fiberwise orbit of $x$ corresponding to $\omega$ cannot be dense.
%Thus, we conclude the result.
%\end{proof}

\subsection{Minimal IFSs of homeomorphisms of the circle}
\label{sec43} As consequence of Theorem~\ref{thmB}, we will obtain
that the deterministic chaos game is totally characterized for
forward minimal IFSs of homeomorphisms of the circle. Moreover,
this characterization allows us to construct attractors of IFSs
that is not renderable by the deterministic chaos game
(counterexamples). \vspace{-0.1cm}
\subsubsection{Characterization} In~\cite[Thm.~A]{BFS14} it was proved that
every forward and backward minimal IFS of preserving-orientation
homeomorphisms of the circle is renderable by the deterministic
chaos game. However, the assumption of preserving-orientation can
be removed from this statement as we explain below.

The main tool in the proof of~\cite[Thm.~A]{BFS14} was~Antonov's
Theorem~\cite{An84} (see~\cite[Thm.~2.1]{BFS14}). This theorem is
stated for preserving-orientation homeomorphisms of the circle.
Supported in this result the authors showed a key lemma
(see~\cite[Lem.~2.2]{BFS14}) to prove the above statement. In
fact, in this lemma, by means of Antonov's result, is the unique
point in the proof where the preserving-orientation assumption is
used. This lemma can be improved removing the preserving
orientation assumption by two different ways. The first is
observing that in fact, this assumption is not necessarily in the
original proof of Antonov as easily one can follow from the
argument described in~\cite[proof of Theorem~2]{GGKV14}. Another
way is to use the recently generalization of Antonov's
result~\cite[Thm.~D]{M15} instead the key lemma above mentioned.

\begin{proof}[Proof of Corollary~\ref{maincor-circ}]
From above,  every forward and backward minimal IFS of
homeomorphisms of the circle is renderable by the deterministic
chaos game. That is, \ref{item-main-cor-3}
implies~\ref{item-main-cor-1}. On the other hand,
\ref{item-main-cor-1} implies~\ref{item-main-cor-3} follows from
Theorem~\ref{thmB}. Finally, to complete the proof of the
corollary it suffices to note that according to
Theorem~\ref{thmA-chaos-game}, \ref{item-main-cor-1}
and~\ref{item-main-cor-2} are equivalent. \qed
\end{proof}

\subsubsection{Counterexample} We will prove now
Corollary~\ref{maincor-contraexemplo}. As in the introduction we
mentioned, for ordinary dynamical systems, the minimality of a map
$T$ is equivalent to that of $T^{-1}$. Nevertheless this is not
the case for dynamical systems with several maps as Kleptsyn and
Nalskii pointed at~\cite[pg.~271]{KN04}. However, they omitted to
include these examples of forward but not backward minimal IFSs.
Hence, to provide a complete proof of
Corollary~\ref{maincor-contraexemplo} we will show that indeed
such IFSs of homeomorphisms of $S^1$ can be constructed. \\

\noindent {\sf a) Forward but not backward minimal IFSs on the
circle:}
%In this part we check that the forward minimality of an IFS on the
%circle is not sufficient to satisfy the deterministic chaos game.
%\begin{pro}
%There exists an IFS generated by homeomorphisms of the circle which is forward minimal and which does not satisfy the deterministic chaos game.
%\end{pro}
%\begin{proof}
%By Corollary \ref{maincor-circ}, it is equivalent to prove the existence of an IFS on the circle which is forward minimal but not backward minimal.
%
Consider  a group $G$ of homeomorphisms of the circle. Then, there
can occur only one of the following three
options~\cite{Navas,Gh01}:
\begin{enumerate}[itemsep=0.1cm]
\item there is a finite $G$-orbit,
\item every $G$-orbit is dense on the circle, or
\item there is a unique $G$-invariant minimal Cantor set.
\end{enumerate}
\vspace{0.1cm} By a  $G$-orbit we understand the action of $G$ at a
point $x\in S^1$. That is the set of points $G(x)=\{g(x): g\in
G\}$. If $G(x)$ has finitely many different elements then it is
called \emph{finite orbit} while if its closure is $S^1$, it say
\emph{dense orbit}. The Cantor set $K$ in the above third conclusion is
usually called \emph{exceptional minimal set}. This set is
$G$-invariant and minimal, that is,
$$
g(K)=K \ \ \text{for all $g\in G$} \quad  \text{and} \quad
K=\overline{G(x)} \ \
   \text{for all $x\in K$}.
$$
Notice that these properties are the same to say that $K$ is
minimal regarding to the inclusion of $G$-invariant closed sets.
The following proposition is stated in~\cite[Exer.~2.1.5]{Navas}.
For completeness, we include the proof.
\begin{pro}\label{transnonmin}
There exists a finitely generated group $G$ of homeomorphisms of
$S^1$ admitting an exceptional minimal set $K$ such that the
$G$-orbit of every point of $S^1\setminus K$ is dense in $S^1$.
\end{pro}
%Recall that a set $A$ is \emph{invariant by $G$} if $g(A)=A$ for
%all $g\in G$.
\begin{proof}
Let $f$ a homeomorphism of the circle with a minimal exceptional
set $K$ and such that there is only one class of gaps, which means
that for every gaps $I$, $J$, there exists $n$ in $\mathbb{Z}$
such that $f^n(I)=J$. For instance, the classic Denjoy map. Let
$I_0$ be a gap of $K$. Let $u:I_0\rightarrow \mathbb{R}$ be a
homeomorphism,  $\tilde{f}_1$ and $\tilde{f}_2$ be respectively
the translations $x\mapsto x+1$ and $x\mapsto x+\sqrt{2}$ on
$\mathbb{R}$, and let us set define two homeomorphisms $f_1$ and
$f_2$ of $S^1$ by $f_i=u^{-1}\tilde{f}_iu$ on $I_0$,
$f_i=\mathrm{id}$ on $S^1\setminus I_0$.
 We claim that the group $G$ generated by
 $f$, $f_1$ and $f_2$ satisfies the required properties.
 Obviously, $K$ is also the minimal exceptional of $G$ since
 $f_i|_K=\mathrm{id}$. On the other hand, the subgroup $H$
 generated by $f_1$ and $f_2$ leaves the gap
$I_0$ invariant and acts minimally on it since the group generated
by $\tilde{f}_1$ and $\tilde{f}_2$ acts minimally on $\mathbb{R}$.
Hence, let $x$ be in $S^1\setminus K$ and $I$ be an interval of
the circle. Since there is only one class of gaps one can find $m$
and $n$ in $\mathbb{Z}$ such $f^m(x)\in I_0$ and $f^n(I)\cap
I_0\not=\emptyset$. Next, by minimality of the action of $H$ on
$I_0$, one can find $h$ in $H$ such that $h(f^m(x))\in f^n(I)$.
Thus, the element $g=f^{-n}hf^m$ of $G$ sends $x$ into $I$. Since
$I$ is arbitrary by $G$, the orbit of $x$ by $G$ is
    dense. \qed
\end{proof}

We say that a subset $\mathscr{F}$ of a group $G$ is a
\emph{symmetric generating system} of $G$ if $G$ is generated by
$\mathscr{F}$ as a semigroup. Moreover, we ask that if $f\in
\mathscr{F}$ then also $f^{-1}\in \mathcal{F}$. Hence, we can see
the action of the group as a symmetric IFS generated by
$\mathscr{F}$.

\begin{rem}
\label{rem-quasi} Let $\mathscr{F}$ be a finite symmetric
generating system of the group $G$ given in
Proposition~\ref{transnonmin}. Hence, from the above observation,
it follows that the exceptional minimal set $K$ is the unique
quasi-attractor of the symmetric IFS generated by $\mathscr{F}$
and it holds that $\mathcal{B}^*_p(K)=K$. This provides an example
of a quasi-attractor of a non-minimal IFS which cannot be a
pointwise attractor.
\end{rem}
 We will use the following:
\begin{lem}\label{3invariant}
Let $G$ and $K$ be as in Proposition \ref{transnonmin}. Then the
closed subsets of $S^1$ which are invariant by $G$ are $\emptyset$,
$K$ and $S^1$.
\end{lem}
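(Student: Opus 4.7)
The plan is to split on whether the closed invariant set $F$ meets $S^1\setminus K$ or is contained in $K$, and use in each case the appropriate minimality/density statement supplied by Proposition~\ref{transnonmin}. Let $F$ be a nonempty closed $G$-invariant subset of $S^1$ (the case $F=\emptyset$ is already on the list). Since $G$ is a group, $G$-invariance means $g(F)=F$ for all $g\in G$, equivalently $G(x)\subset F$ for every $x\in F$.

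First I would treat the case $F\not\subset K$. Pick $x\in F\cap(S^1\setminus K)$. By Proposition~\ref{transnonmin}, the orbit $G(x)$ is dense in $S^1$. Because $F$ is closed and contains $G(x)$, we get $S^1=\overline{G(x)}\subset F$, so $F=S^1$.

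Next I would treat the case $F\subset K$. Since $F$ is nonempty, pick any $x\in F$. Since $K$ is an exceptional \emph{minimal} set, the $G$-orbit of every point of $K$ is dense in $K$, so $\overline{G(x)}=K$. Again $G(x)\subset F$ and $F$ is closed, hence $K\subset F$, and combined with $F\subset K$ this gives $F=K$.

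There is really no obstacle here; the only thing to double-check is that the two notions of minimality in play (``every $G$-orbit is dense in $S^1$'' outside $K$, and ``every $G$-orbit in $K$ is dense in $K$'') are exactly what Proposition~\ref{transnonmin} guarantees, which they are by the definition of exceptional minimal set recalled just before the statement. Combining the two cases with the trivial case $F=\emptyset$ exhausts all possibilities and yields the lemma.
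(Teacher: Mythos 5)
Your proof is correct and follows essentially the same argument as the paper: both use the minimality of $K$ (orbits of points of $K$ are dense in $K$) together with the density in $S^1$ of orbits of points of $S^1\setminus K$, differing only in that you split on $F\subset K$ versus $F\not\subset K$ while the paper first deduces $K\subset B$ and then handles $B\neq K$. No gaps.
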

\begin{proof}
Let $B$ be a closed subset of $S^1$ invariant by $G$. If $B\not=
\emptyset$, then $K\subset B$ by minimality of $K$, and if
$B\not=K$, it means that $B$ contains a point $x$ in $S^1\setminus
K$, and by invariance, $B$ contains the orbit of $x$ by $G$ which
is dense, hence $B=S^1$.
\end{proof}

 On the other hand,
%according to~\cite[pag. 100]{Hocking and Young 1955}
any two Cantor set are homemorphic. In fact, if $K_I$ and $K_J$
are two Cantor sets in an interval $I$ and $J$ respectively, there
exists a homeomorphisms $g:I\to J$ so that $g(K_I)=K_J$ (see for
instance \cite{Bo75}). Hence given any Cantor set $K$ in $S^1$ one
can find a homeomorphisms $h$ of $S^1$ so that $h(K)$ is strictly
contained in $K$ (or $h(K)$ strictly contains $K$).

\begin{prop}
\label{prop:contraexemplo}
 Let $G$ and $K$ be as in Proposition \ref{transnonmin} and
$f_1,\ldots,f_n$ be a symmetric system of generators of $G$.
Consider any  homeomorphisms $h$ of $S^1$ such that $h(K)$
strictly contains $K$. Then the IFS generated by
$f_1,\ldots,f_n,h$ is forward minimal but not backward minimal.
\end{prop}
\begin{proof}
Let  $ K_1\eqdef h(K)$. By assumption $K \subsetneq K_1$. We claim
that the IFS generated by $f_1,\ldots,f_n,h$ is forward minimal
but not backward minimal.
\begin{itemize}[itemsep=0.1cm,label=\textbullet, leftmargin=1pc, labelsep=*]
\item \emph{The IFS is not backward minimal}: since $K$ is invariant
by the
group $G$,
$$\text{$f_i^{-1}(K)=K$  \ \ for $i=1,\ldots,n$.}
$$ We also have
$h^{-1}(K)\subset h^{-1}(K_1)=K$. Thus, $K$ is forward invariant
by $f_1^{-1},\ldots,f_n^{-1},h^{-1}$ and so the IFS is not
backward minimal.
\item \emph{The IFS is forward minimal}: let $B\subset S^1$ be a forward invariant by $f_1,\ldots,f_n,h$ closed set. In particular $B$
is invariant by $G$, hence $B\in\{\emptyset,K,S^1\}$ by Lemma
\ref{3invariant}. Moreover $B\not= K$ since $K$ is not invariant
by $h$ (otherwise $K_1=h(K)=h(B)\subset B=K$ but $K_1$ strictly
contains $K$). So, $B\in \{\emptyset,S^1\}$, which means that the
IFS is forward minimal. \qedhere
\end{itemize}
\end{proof}

\noindent {\sf b) Strict attractors:}  To complete the proof of
Corollary~\ref{maincor-contraexemplo} we need to show that $S^1$
is a strict attractor of the IFS generated by $f_1,\dots,f_n,h$ in
Proposition~\ref{prop:contraexemplo}. We infer this from the next
result.

We say that an IFS generated by a family $\mathscr{F}$ of
continuous maps of $X$ is \emph{quasi-symmetric} if there is $f\in
\mathscr{F}$ so that it inverse map $f^{-1}\in \mathscr{F}$.

\begin{prop}
\label{prop-quasi-symmetric} Consider a minimal quasi-symmetric
IFS on a compact connected Hausdorff space $X$. Then $X$ is an
strict attractor of this IFS.
\end{prop}

Before to prove the above proposition we need  the following Lemma
(c.f.~\cite[Lemma~4.15]{M15}). Again, for completeness,  we
include the proof.

\begin{lem}
\label{lem-minimal-k} Consider a minimal IFS generated by a family
$\mathscr{F}$ of continuous maps of a connected  Hausdorff
topological space $X$. Then the IFS generated by
$\mathscr{F}^2=\{f\circ g : f, g \in  \mathscr{F} \}$is also
minimal.
\end{lem}
\begin{proof}
Throughout the proof, we extend the Hutchinson operator
$F=F_\mathscr{F}$ to the hyperspace of non-empty closed sets. We
want to prove that if $B$ be a non-empty closed subset of $X$ so
that $F^2(B)\subset B$ then $B=X$. Notice that
$$
B'\eqdef B\cup F(B) \quad \text{and} \quad B''\eqdef B\cap F(B)
$$
are both forward invariant set, i.e. $F(B')\subset B'$ and
$F(B'')\subset B''$. By the minimality of the IFS generated by
$\mathscr{F}$ it follows that $B'=X$. Hence, since $X$ is a
connected space and both $B$ and $F(B)$ are closed we get that
$B''\not =\emptyset$. Thus, again by the minimality we have that
$B''=X$ and therefore $B=X$.
%
%
%
%
%Set
%$$
%   \mathcal{B}=\{B\subset X: \ \text{$B$ non-empty closed set so that $F^m(B)\subset
%   B$}
%   \}.
%$$
%Notice that for any positive integer $n$, if $B\in \mathcal{B}$
%then $F^{n}(B)\in \mathcal{B}$. Let $B\in \mathcal{B}$ be minimal
%with respect to the inclusion. Then,
%\begin{itemize}[itemsep=0.1cm,label=\textbullet, leftmargin=1pc, labelsep=*]
%\item $F^m(B)=B$ by the minimality of $B$, since $F^m(B)\in
%\mathcal{B}$ and $F^m(B)\subset B$; and
%\item $F^n(B)$ is minimal in
%$\mathcal{B}$ regarding to the inclusion for all positive integer
%$n$. Indeed, if $K \in \mathcal{B}$ and $K\subset F^n(B)$ with
%$n<m$, then $F^{m-n}(K)\subset B$ and $F^{m-n}(K)\in \mathcal{B}$
%since $m-n>0$. Hence $F^{m-n}(K)=B$ by the minimality and thus
%$F^n(B)=F^m(K)\subset K$ and so $K=F^n(B)$.
%\end{itemize}
%We conclude that the sequence $(F^n(B))_n$ is periodic with
%elements that are pairwise disjoint or equal. Let $s$ be the
%smallest integer so that $F^s(B)=B$. From these it infers that $
%B, F(B),\dots, F^{s-1}(B)$ are pairwise disjoint closed sets and
%      $F^n(B) \in \{ B, F(B),\dots F^{s-1}(B)\}$ for all $n\geq
%      1$.
%By the minimality of the IFS generated by $\mathscr{F}$ it follows
%that
%$$
% X = B\cup F(B) \cup \dots \cup F^{s-1}(B).
%$$
%Hence, since $X$ is connected, we get  $s=1$ and $B=X$ concluding
%the proof. \qed
\end{proof}

\begin{proof}[Proof of Proposition~\ref{prop-quasi-symmetric}]
Let $K$ be a compact set of $X$. We want to show that $F^n(K) \to
X$ in the Vietoris topology. The first observation is that, since
the IFS is quasi-symmetric then $K\subset F^2(K)$. Then,
%$F^n(K)\subset F^{n+2}(K)$ for all $n\geq 1$. In particular,
$$
\text{$(F^2)^n(K) \subset (F^{2})^{n+1}(K)$ \ \ for all $n\geq 1$.}
$$
By Lemma~\ref{lem-minimal-k}, the IFS generated by $\mathscr{F}^2$
is also minimal and thus, for every open set $V$ of $X$ there is
$n_0\in\mathbb{N}$ such that $(F^2)^{n_0}(K)\cap V\not
=\emptyset$. So, by the monotonicity of this sequence,
$(F^{2})^n(K)\cap V \not=\emptyset$ for all $n\geq n_0$. Thus,
according to Lemma~\ref{lem-conv-Vietoris}, we have that
$(F^2)^n(K)\to X$ in the Vietoris topology. By means of the
continuity of the Hutchinson operator $F$ and since $X$ is a
self-similar set we also have that $F^{2n+1}(K)\to X$. Thus, we
conclude that $F^n(K)\to X$.
%in the Vietoris topology.
\end{proof}

We want to remark that in the case of the homeomorphisms of the
circle we have an stronger result:

\begin{pro}
\label{prop:Malicet} Let $f_1,\ldots f_k$ homeomorphisms of $S^1$
without a common invariant probability measure, and such that the
IFS generated by them is minimal. Then $S^1$ is a strict
attractor~for~this~IFS.
\end{pro}
\begin{proof}
Let $x$ in $S^1$, and let $\mu_n$ be the law of
$f_\omega^n(x)=f_{\omega_n}\circ\cdots\circ f_{\omega_1}(x)$,
where $\omega_1,\ldots,\omega_n$ are chosen independently and
uniformly on $\{1,\ldots,k\}$. By \cite[Cor.~2.6]{M15}, the
sequence $(\mu_n)_{n\in\mathbb{N}}$ converges weakly as $n\to
\infty$ to the unique stationary probability measure $\mu$ of the
system, i.e. to the self-similar measure
$$\mu=\frac{1}{k}((f_1)_*\mu+\cdots+(f_k)_*\mu).$$ Moreover, this measure
$\mu$ has total support because its topological support is
invariant by $f_1,\ldots,f_k$. Consequently, for any interval $I$
of the circle, $\mu(I)>0$ and so $\mu_n(I)>0$ for $n$ large
enough. Since we clearly have that $\mathrm{supp}(\mu_n)\subset
F^n(\{x\})$,  we deduce that $F^n(\{x\})\cap I\not=\emptyset$ for
all $n$ sufficient large, and hence we conclude by
Lemma~\ref{lem-conv-Vietoris} that $S^1$ is an atractor.
\end{proof}

\subsection{Sufficient conditions}
\label{sec44}

In what follows, $A$ denotes a quasi-attractor.

\subsubsection{Well-fibred attractors}
We start studying the relation between strongly-fibred and
well-fibred quasi-attractors.

\begin{pro}
\label{prop-strong} If $A$ is strongly-fibred then it is
well-fibred. Moreover, if in addition $A$ is a strict attractor
then for every compact set $K$ in $\mathcal{B}(A)$ and every open
set $U$ so that $A\cap U\not=\emptyset$ there exists $g \subset
\Gamma$ such that $g(K)\subset U$.
\end{pro}

\begin{proof}
Consider a compact set $K$ in $A$ and let $U$ be any open set such
that $A\cap U\not=\emptyset$. Since $A$ is strongly-fibred, we get
$\omega\in\Omega$ such that
$$
A_{\omega}= \bigcap_{n=1}^\infty f_{\omega_1}\circ\cdots \circ
f_{\omega_n}(A) \subset U.
$$
Notice that since $f_i(A)\subset A$ for $i=1,\dots,k$ then
$f_{\omega_1}\circ\cdots \circ f_{\omega_n}(A)$ is a nested
sequence of compact sets and thus, for $n$ large enough,
$f_{\omega_1}\circ\cdots \circ f_{\omega_n}(A) \subset U$. In
particular, taking $h= f_{\omega_1}\circ\cdots \circ f_{\omega_n}
\in \Gamma$ we have that $h(K)\subset U$. This proves that $A$ is
well-fibred.

We will assume now that $A$ is a strict attractor and consider $K$
in $\mathcal{B}(A)$.  As above we have that $h(A) \subset U$. We
claim that there exists a neighborhood $V$ of $A$ such that $h(V)
\subset U$. Indeed, it suffices to note that $h$ is a continuous
map and hence $h^{-1}(U)$ is an open set containing the compact
set $A$.
%
%\begin{claim} If $A$ is a pointwise attractor then
%for every compact set $K$ in $\mathcal{B}_p(A)$ and every open
%neighborhood $V$ of $A$, there exists $n\in \mathbb{N}$ such that
%$F^n(K)\subset U$.
%\end{claim}
%\begin{proof}
%For each $x\in K$, $F^n(\{x\})\to A$ in the Vietoris topology and
%thus there exists $n=n(x)\in\mathbb{N}$ such that
%$F^m(\{x\})\subset U$ for all $m\geq n$. By continuity of the
%generators, we get an open neighborhood $V_x$ of $x$ such that
%$f_{i_n}\circ \dots \circ f_{i_1} (V_x)\subset U$ for all $i_j \in
%\{1,\dots,k\}$ and $j=1,\dots,n$. By the compactness of $K$, we
%get a finite open cover $\{V_1,\dots,V_s\}$ of $K$ and positive
%integers $n_1,\dots,n_s$ such that
%
%$f_{i_{n_y}}\circ \dots \circ f_{i_1} (V_x)\subset U$ for all $i_j
%\in \{1,\dots,k\}$ and $j=1,\dots,n$
%\end{proof}
%
%As consequence of the above claim,
Since $A$ is a strict attractor, $F^n(K)\to A$ in the Vietoris
topology and in particular, there is $f\in \Gamma$ such that
$f(K)\subset V$. Thus, taking $g=h\circ f \in \Gamma$, it follows
that  $g(K) \subset h(V) \subset U$.
\end{proof}

\begin{rem}
If $A$ is strongly fibred we have proved that one can contract any
compact set in $A$. In particular we can contract $A$ and this
implies that there exists some generator $f_i$ such that
$f_i(A)\not=A$.
\end{rem}

Now, we give an example of an IFS defined on $S^1$ whose unique
strict attractor is the whole space (that is the IFS is minimal)
and it is well-fibred but not strongly-fibred. This example shows
that these two properties are not equivalent. See also
Corollary~\ref{cor-forward-backward} at the end of this
subsection.

\begin{example}
\label{example} Consider the IFS generated by two diffeomorphisms
$g_1, g_2$, where $g_1$ is rotation with irrational rotation
number and $g_2$ is an orientation preserving diffeomorphism with
a unique fixed point $p$ such that $Dg_2(p)=1$ and $\alpha$-limit
set and $\omega$-limit set of each point $q \in S^1$ is equal to
$\{p\}$.
%This means that the positive orbit of $q$ tends to $p$ in
%one direction and negative
%orbit of $q$ tends to $p$ in opposite direction.
Clearly, the IFS acts minimally on $S^1$ and have no common
invariant measure thus $A=S^1$ is the attractor. Since $g_1$ and
$g_2$ map $S^1$ onto itself, it follows that for each $\omega \in
\Omega$, the fiber $A_\omega =S^1$. This implies that $S^1$ is not
strongly-fibred, but it still well-fibred. Indeed, let $K$ be any
compact set so that $K\not= S^1$. Then, there is an open arc $J$
of $S^1$ which is not dense in $S^1$ such that $K\subset  J$. If
$J$ contains the fixed point $p$, there is an integer $n$ such
that $g_1^n(J)$ does not contain $p$. So, without less of
generality, we may assume that $p \not\in J$. Now, it is easy to
see that $g_2^k(J)$ tends to $p$ as $k \to \infty$. This implies
that $A=S^1$ is well-fibred.
\end{example}

Above example is based on the fact that $A$ satisfies that
$f_i(A)=A$ for all $i=1,\dots,k$. The above proposition and the
following show that if  $f_i(A)$ is not equals to $A$ for some
generator $f_i$ then both properties are equivalent.

\begin{pro}\label{equiv}
If $A$ is well-fibred and $f_i(A)\not = A$ for some
$i\in\{1,\dots,k\}$ then $A$ is strongly-fibred.
\end{pro}
\begin{proof}
First of all note that it suffices to prove that for any open set
$U$ with $U\cap A\not =\emptyset$, there is $h\in\Gamma$ so that
$h(A)\subset U$. To this end, notice that since $A$ is a
quasi-attractor then the action of $\Gamma$ restricted to $A$ is
minimal. Then, there exist $h_1,\dots,h_m\in \Gamma$ so that
$A\subset h_1^{-1}(U)\cup\dots\cup h_m^{-1}(U)$. On the other
hand, by assumption, there is $i \in \{1,\dots, k\}$ such that
$f_{i}(A)\neq A$. Hence $f_i(A)$ is a compact set strictly
contained in $A$ and since $A$ is well-fibred there exist $g\in
\Gamma$ and $j\in\{1,\dots,m\}$ such that $g(f_i(A)) \subset
h_j^{-1}(U)$. Thus, taking $h=h_j\circ g\circ f_i\in\Gamma$, it
follows that $h(A)\subset U$ concluding the proof.
\end{proof}

In order to proof Theorem~\ref{thmE}, we need a lemma (compare
with Claim~\ref{claim-desisdad-disj}). Here we understand
$f_{i_t}\circ \cdots \circ f_{i_1}$ for $t=0$ as the identity map.

\begin{lem}
\label{claim2-desisdad-disj}  If for any non-empty open set
$I\subset X$ with $A\cap I \not =\emptyset$, there exist a
neighborhood $Z$ of $A$ and $f_{i_s}\circ \cdots \circ f_{i_1} \in
\Gamma$  such that
\begin{equation*}
\label{eq:1} \text{for each $z\in Z$ there is $t \in
\{0,\dots,s\}$ so that $f_{i_t}\circ \cdots \circ f_{i_1}(z)\in
I$}
\end{equation*}
then
$$
A \subset \overline{O^+_\omega(x)} \quad \text{for all disjunctive
sequences $\omega\in \Omega$  and  $x\in \mathcal{B}^*_p(A)$.}
$$
\end{lem}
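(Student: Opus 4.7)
My plan is to adapt the proof of Claim~\ref{claim-desisdad-disj} to the present setting, noting two differences: first, here $Z$ is only a neighborhood of $A$ rather than a forward invariant set, so we must actively use that $x \in \mathcal{B}_p(A)$ to ensure iterates of $x$ enter $Z$; second, the iteration length is already uniformly bounded by $s$ in the hypothesis, so no compactness argument is needed to extract a uniform bound.

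Fix an arbitrary disjunctive sequence $\omega \in \Omega$, an arbitrary $x \in \mathcal{B}_p(A)$, and an arbitrary non-empty open set $I \subset X$ with $A \cap I \neq \emptyset$. Since $I$ is an arbitrary open set meeting $A$, proving that $f^n_\omega(x) \in I$ for some $n$ will imply $A \subset \overline{O^+_\omega(x)}$ by the Hausdorff property. I would next invoke the hypothesis to obtain a neighborhood $Z$ of $A$ and indices $i_1,\dots,i_s$. Choosing an open set $U$ with $A \subset U \subset Z$, the fact that $F^n(\{x\}) \to A$ in the Vietoris topology gives, by Lemma~\ref{lem-conv-Vietoris}, an index $n_0 \in \mathbb{N}$ such that $F^n(\{x\}) \subset U \subset Z$ for every $n \geq n_0$. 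In particular, $f^n_\eta(x) \in Z$ for every $\eta \in \Omega$ and every $n \geq n_0$.

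Now I would use that $\omega$ is disjunctive: every finite word appears infinitely often in $\omega$, so there exists $m \geq n_0$ with
$$\omega_{m+1}\, \omega_{m+2} \cdots \omega_{m+s} = i_1\, i_2 \cdots i_s.$$
Setting $z = f^m_\omega(x)$, the previous step guarantees $z \in Z$. By the hypothesis, there is $t \in \{0, \dots, s\}$ such that $f_{i_t} \circ \dots \circ f_{i_1}(z) \in I$, and by the choice of $m$ this equals $f^{m+t}_\omega(x)$. Hence $O^+_\omega(x) \cap I \neq \emptyset$, and letting $I$ range over a base of open neighborhoods of each point of $A$ yields $A \subset \overline{O^+_\omega(x)}$.

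There is no genuine obstacle: the only subtlety is that we need $m$ to be chosen large (i.e. $\geq n_0$) rather than merely to exist, but this is immediate from the disjunctive property, which says precisely that every finite pattern recurs infinitely often in $\omega$. The role played by the compactness of $K = \overline{\Gamma(x)}$ in Claim~\ref{claim-desisdad-disj} is replaced here by the built-in uniformity ``$t \leq s$'' in the hypothesis, so we do not need to patch together finitely many open sets as was done there.
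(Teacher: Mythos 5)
Your proof is correct and follows essentially the same route as the paper's: the paper also picks, using disjunctivity together with $F^n(\{x\})\to A$ and the fact that $Z$ is a neighborhood of $A$, an $m$ with $[\sigma^m(\omega)]_j=i_j$ for $j=1,\dots,s$ and $f^m_\omega(x)\in Z$, then applies the hypothesis to land in $I$. You merely spell out the details the paper compresses (Lemma~\ref{lem-conv-Vietoris} giving $n_0$, and the fact that every finite word recurs at arbitrarily large positions in a disjunctive sequence), so no further comment is needed.
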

\begin{proof}
Consider $x\in \mathcal{B}^*_p(A)$, disjunctive sequence
$\omega\in\Omega$ and any open set $I$ such that $A\cap I \not =
\emptyset$. Being $Z$ a neighborhood of $A$ and $\Ls F^n(\{x\})=
A$ we can choose $m\geq 1$ such that
$$
[\sigma^m(\omega)]_j = i_j \ \ \text{for $j = 1,\dots, s$  \ \ and
\ \ $z=f^{m}_\omega(x)\in Z$.}
$$
Hence, by assumption, there exists $t=t(z)$ such one has that
$f^{m+t}_\omega(x) \in I$ which proves the density on $A$ of the
$\omega$-fiberwise orbit of $x$.
\end{proof}

The following result proves the first part in Theorem~\ref{thmE}.

\begin{pro}
A well-fibred quasi-attractor $A$  is renderable by the
deterministic chaos game.
\end{pro}
\begin{proof}
In order to apply Lemma~\ref{claim2-desisdad-disj}, we consider
any non-empty open set $I$ with $I_A\eqdef A\cap I \not
=\emptyset$.
%Without loos of generality, we can assume that $I_A=A\cap I \not = A$.
Hence, $K=A\setminus I_A$ is a compact set so that $K\not=A$.
Since $A$ is a quasi-attractor, the action of $\Gamma$ restrict to
$A$ is minimal and thus, there exist $h_1,\dots,h_m \in \Gamma$
such that $A\subset h_1^{-1}(I)\cup \dots \cup h^{-1}_m(I)$. On
the other hand, since $A$ is well-fibred, there exist $i\in
\{1,\dots,m\}$ and $g\in \Gamma$ such that $g(K) \subset
h_i^{-1}(I)$.  Take $h=h_i\circ g$. By continuity of the
generators we find an open set $U$ with $K\subset U$ such that
$h(U) \subset I$. Take, $Z=U\cup I$ and $f_{i_s}\circ\cdots \circ
f_{i_1}=h\in \Gamma$. Clearly, $Z$ is open with $A= K\cup I_A
\subset U \cup I =Z$ and for every $z\in Z$, there is $t\in
\{0,s\}$ such that $f_{i_t}\circ \cdots \circ f_{i_1}(z) \in I$.
Lemma~\ref{claim2-desisdad-disj} implies $A$ is renderable by the
deterministic chaos game.
\end{proof}

Now, we conclude the  proof of Theorem~\ref{thmE}.

\begin{pro}
Consider a well-fibred forward minimal IFS generated by continuous
maps of a compact Hausdorff topological space $A$. Assume the IFS
is either, strongly-fibred or invertible (its generators are
homeomorphisms). Then
$$
         \Omega \times A = \overline{\{\Phi^n(\omega,x):
         n\in\mathbb{N}\}} \quad \text{for all disjunctive sequences $\omega$ and  $x \in A$}.
$$
\end{pro}
\begin{proof}
Let $\omega \in \Omega$ be a disjunctive sequence and consider
$x\in A$. We want to show that $(\omega,x)$ has dense orbit in
$\Omega \times A$ under the skew-product $\Phi$. In order to prove
this, let $C^+_\alpha \times I$ be a basic open set of
$\Omega\times A$. That is, $C^+_\alpha$ is a cylinder in $\Omega$
around of a finite word $\alpha=\alpha_1\dots \alpha_\ell$ and $I$
is an open set in $A$. In fact, we can assume that $I$ is not
equal to the whole space. It suffices to prove that there exists
an iterated by $\Phi$ of $(\omega,x)$ that belongs to $C^+_\alpha
\times I$. To do this, similarly as in the previous proposition,
we use the forward minimality of $\Gamma$ on $A$ to find maps
$h_1,\dots, h_m\in\Gamma$ such that $A= h_1^{-1}(I)\cup \dots \cup
h^{-1}_m(I)$. Set $f=f_{\alpha_\ell}\circ \cdots \circ
f_{\alpha_1} \in \Gamma$.

Assume first that the IFS is strongly-fibred. Then there exists a
generator $f_i$ such that $K=f_i(A) \not =A$. By
Proposition~\ref{prop-strong}, the IFS is well-fibred and thus we
find $g\in \Gamma$ such that $g(K)\subset h_\ell^{-1}(I)$ for some
$\ell\in\{1,\dots,m\}$. Hence, $h(K)\subset I$ where
$h=h_\ell\circ g$. Let $f\circ h\circ f_i =f_{i_s}\circ\cdots
\circ f_{i_1}$. Since $\omega\in \Omega$ is a disjunctive
sequence, we can choose $m\geq 1$ such that
\begin{equation}
\label{eq:sigma}
 [\sigma^m(\omega)]_j = i_j \ \ \text{for $j =
1,\dots, s$.}
\end{equation}
Set $z=f^{m}_\omega(x)$. Then, $h \circ f_i(z)\in I$. Moreover,
$$
\Phi^{m+t}(\omega,x)=\Phi^t(\sigma^m(\omega),z)=
(\sigma^{m+t}(\omega),h\circ f_i(z))\in C^+_\alpha\times I
$$
where $t=1+|h|$ being $|h|$ the length of $h$ with respect to
$\mathscr{F}=\{f_1,\dots,f_k\}$.

Now, assume that the well-fibred forward minimal IFS is also
invertible. Hence $f$ is a homeomorphism of $A$ and thus
$\emptyset \not =f(I) \not = A$ is an open set. Let $K=A\setminus
f(I)$. Notice that $K$ is a non-empty compact set different of $A$
and by means of the ``contractibility'' of the IFS we get $g\in
\Gamma$ so that $h(K)\subset I$ where $h=h_\ell\circ g$ for some
$\ell\in \{1,\dots,m\}$. Let $f\circ h \circ f=f_{i_s}\circ\cdots
\circ f_{i_1}$. Similar as above, since $\omega$  is a disjunctive
sequence we choose $m\geq 1$ satisfying~\eqref{eq:sigma} and
denote $z=f^m_\omega(x)$. If $z\in I$,
$\Phi^m(\omega,x)=(\sigma^m(\omega),z) \in C_\alpha^+ \times I$.
Otherwise, $f(z)\in K$ and then $h\circ f(z) \in I$ and thus
$$
\Phi^{m+t}(\omega,x)=\Phi^t(\sigma^m(\omega),z)=
(\sigma^{m+t}(\omega),h\circ f(z))\in C^+_\alpha\times I
$$
where $t=|f|+|h|$ being $|f|$ and $|h|$ the length of $f$ and $h$
respectively.
%The proof of the proposition is now completed.
\end{proof}

We end this subsection showing a broad family of IFSs with a
well-fibred quasi-attractor which are not strongly-fibred. Notice
that this family contains the IFS of Example~\ref{example}.

\begin{cor}
\label{cor-forward-backward}
 Consider a forward and backward
minimal IFS of homeomorphisms of a metric space $X$ and assume
that there is a map $h$ in the semigroup $\Gamma$ generated by
these maps with exactly two fixed points, one attracting and one
repelling. Then $X$ is a well-fibred quasi-attractor and
consequently is renderable by the deterministic chaos game.
\end{cor}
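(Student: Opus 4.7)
The plan is to check that $X$ is both a pointwise attractor and contractible, whence Theorem~\ref{thmE} yields the deterministic chaos game; strictness of the attractor will then come for free because the candidate attractor is the whole space. First I would observe that $X$ is self-similar: each generator $f_i$ is a homeomorphism of $X$ and hence surjective, so $F(X)=\bigcup_i f_i(X)=X$. Combined with forward minimality, Proposition~\ref{pro:self-similar} gives $X\subset \mathcal{B}_p(X)$, which exhibits $X$ as a pointwise attractor (take $U=X$).

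The crux is contractibility. Given a compact $K\subsetneq X$, I would use backward minimality to shift $K$ away from the repelling fixed point $q$ of $h$: by the characterization at the end of Subsection~\ref{sec42}, $\overline{\Gamma^{-1}(q)}=X$, so $\Gamma^{-1}(q)\cap (X\setminus K)\neq \emptyset$, yielding $g\in \Gamma$ with $g^{-1}(q)\notin K$. Since $g$ is a composition of homeomorphisms, this is equivalent to $q\notin g(K)$. Now $g(K)$ is a compact subset of $X\setminus\{q\}$, and the assumption that $p$ (attracting) and $q$ (repelling) are the only fixed points of $h$ on the compact space $X$ gives North--South dynamics: $h^n\to p$ uniformly on compact subsets of $X\setminus\{q\}$. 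Therefore $\diam(h^n\circ g(K))\to 0$, and setting $g_n=h^n\circ g\in\Gamma$ verifies the metric-space characterization of contractibility recorded just before Lemma~\ref{prop-strong}.

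For strictness, observe that for any nonempty compact $K\subset X$ and any $x\in K$, monotonicity of $F^n$ under inclusion yields $F^n(\{x\})\subset F^n(K)\subset X$; since $x\in X\subset \mathcal{B}_p(X)$ we have $F^n(\{x\})\to X$ in the Vietoris topology, which forces $F^n(K)\to X$. Thus $X$ is a strict attractor, and Theorem~\ref{thmE} applied to the pointwise contractible attractor $X$ completes the proof.

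The main obstacle is the North--South assertion for $h$: one must confirm that on the compact space $X$ the hypothesis ``exactly two fixed points, one attracting and one repelling'' forces the basin of attraction of $p$ under $h$ to be exactly $X\setminus\{q\}$, with convergence uniform on compacta. This is standard in many contexts (e.g.\ circle homeomorphisms, for which the corollary is most naturally applied) but should be verified carefully; the remaining steps are a straightforward assembly of results already proved in the paper.
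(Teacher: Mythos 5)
Your proposal is correct and follows essentially the same route as the paper: backward minimality is used to produce $g\in\Gamma$ with $q\notin g(K)$, and then the iterates $h^n\circ g$ contract $K$, establishing contractibility so that Theorem~\ref{thmE} applies. The differences are cosmetic (you spell out the pointwise/strict attractor step and use the density of $\Gamma^{-1}(q)$ rather than a finite cover of $X$ by images of $X\setminus K$), and the uniform North--South convergence you flag as the delicate point is precisely the step the paper's proof also takes for granted.
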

\begin{proof}
The forward minimality implies that $X$ is a quasi-attractor.
Consider now any compact set $ K \subset X$ such that $K \not =
X$. By the backward minimality there exist $T_1,\dots, T_s \in
\Gamma$ such that
$$
    X = \bigcup_{i=1}^s T_i(X\setminus K).
$$
Let $p$ and $q$ be, respectively, the attracting and the repelling
fixed points of $h$.  Then there is $i\in\{1,\dots,s\}$ so that
$q\in T_i (X\setminus K)$.  Therefore, $q \not\in T_i(K)$ and then
the diameter of $h^n\circ T_i(U)$ converges to zero. This shows
that the action is well-fibred and completes the proof.
\end{proof}

\subsubsection{Quasi-attractors of symmetric IFSs}
We will proof Theorem~\ref{thmF}. This theorem
extends~\cite[Thm.~3.3]{F02} for compact Hausdorff topological
spaces.

\begin{prop}
If $A$ is a quasi-attractor of a symmetric IFS on a Hausdorff
topological space then it is renderable by the deterministic chaos
game.
\end{prop}

\begin{proof}
We will use Lemma~\ref{claim2-desisdad-disj}. To accomplish this,
let $I$ be an open set so that $A\cap I \not=\emptyset$. By the
minimality of the action of $\Gamma$ restricted to $A$, there are
$h_1,\dots,h_m \in \Gamma$ so that $A \subset h_1^{-1}(I)\cup
\dots \cup h^{-1}_m(I)$. Set $Z$ be the union of these open sets.
Since the IFS has a symmetric system of generators
$\mathscr{F}=\{f_1,\dots,f_k\}$,  we can write
$$
f_{i_s}\circ\cdots\circ f_{i_1}=h_m^{-1}\circ h_m\circ \cdots\circ
h_2^{-1}\circ h_2\circ h_1^{-1} \circ h_1.
$$
Hence,
$$
    Z \subset \bigcup_{j=1}^t  f^{-1}_{i_1}\circ \cdots
    \circ f^{-1}_{i_j}(I).
$$
This implies that for each $z\in Z$, there is $t\in \{1,\dots,s\}$
so that $f_{i_t}\circ \cdots \circ f_{i_1}(z) \in I$. Thus, by
Lemma~\ref{claim2-desisdad-disj}, $A$ is renderable by the
deterministic chaos game.
\end{proof}

To end this subsection, we give an example of a quasi-attractor of
an symmetric IFS on the torus $\mathbb{T}^2$ which is not neither
well-fibred nor a quasi-attractor of a non-expansive IFS nor have
a minimal map.

\begin{example}
\label{example-final} Let $f:\mathbb{T}^2 \to \mathbb{T}^2$ be a
generalized north-south pole diffeomorphism on the torus
$\mathbb{T}^2$. By this we mean that the non-wandering set of $f$,
$\Omega(f)$, consists of one fixed source, $q$, one fixed sink,
$p$, and saddle type periodic orbits. Let $S$ be the set of all
the saddle type periodic points of $f$. For simplicity we assume
that $S$ consists of two saddle points so that
$\mathcal{W}=W^s(S)\cup W^u(S) \cup \{p,q\}$ consists of forth
circles: two disjoint circles following the meridian direction
other two disjoint circles following the parallel directions. For
every $x\in \mathbb{T}^2\setminus \mathcal{W}$ it holds that
$f^n(x)\to p$ and $f^{-n}(x)\to q$ as $n\to \infty$. On the other
hand, consider a translation $R_\lambda: \mathbb{T}^2\to
\mathbb{T}^2$, $R_\lambda(x,y)=(x+\lambda_1,y+\lambda_2)$, where
$\lambda=(\lambda_1,\lambda_2)$ is an irrational vector, i.e.,
$\lambda_i\in \mathbb{R}\setminus \mathbb{Q}$ for $i=1,2$. Since
the IFS generated by $f,f^{-1}, R_\lambda,R_\lambda^{-1}$ on
$\mathbb{T}^2$ has minimal elements,  according
to~\cite[Prop.~1]{BFS14}, it is renderable by the deterministic
chaos game. Moreover, it is not difficult to see that this IFS is
$C^1$-robustly minimal, i.e., the minimality persists under small
$C^1$-perturbations on the generators (indeed, easily one can
construct a ``blending region'' around the attracting fixed point
and then apply~\cite[Thm.~6.3]{BFS16}). Thus, there is a rational
vector $\alpha$  closed to $\lambda$ so that the IFS generated by
$f,f^{-1}, R_\alpha,R_\alpha^{-1}$ acts minimally on
$\mathbb{T}^2$. By Theorem \ref{thmF}, this IFS is renderable by
the deterministic chaos game. Clearly, it does not contain any
minimal element and it is not a non-expansive IFS. Also, it is not
well-fibred (indeed, it suffices to consider a compact
neighborhood of a circle that contains $p$ and the unstable
manifold of one saddle).
\end{example}
%\section*{Acknowledgments}
%The authors would like to thank  Carlos Meni\~no for the useful
%conversations during the preparation of this article.
%The authors were partially supported by the following fellowships:  P.~G.~Barrientos by
%The first author was partially supported by Ministerio de Ciencia
%e Innovaci\'on project MTM2014-56953-P (Spain).

\appendix
\renewcommand{\thesection}{A}
\renewcommand{\theequation}{A.\arabic{equation}}
\setcounter{equation}{0} \setcounter{thm}{0}
\section{}
\label{s:appendix} In this appendix we extend the results due to
Bransley, Le\'sniak and Rypka,~\cite{BLR15,L14} on the
probabilistic and the deterministic chaos game for attractors of
IFSs to the general case of quasi-attractors. The proofs basically
follow the same ideas of~\cite{BLR15,L14}  with some minor
modifications and improvements.

\vspace{0.2cm} {\sf On Bransley, Le\'sniak and Rypka probabilistic
chaos game for quasi-attractors:}

\begin{thm*}
Every first-countable quasi-attractor of an IFS of continuous maps
of a Hausdorff topological space is renderable by the
probabilistic chaos game.
\end{thm*}
\begin{proof}
Let $x_0$ be a point of $\mathcal{B}^*_p(A)$ and let $U$ an open
subset of $A$. We want to prove that the event
$$
E=E(x_0,U)\eqdef \{\omega\in\Omega: \, O^+_\omega(x_0)\cap
U\not=\emptyset\}
$$
has probability $1$.  Since $A$ is a minimal forward invariant
set, for any $x$ in $A$ we can find a finite sequence $i_1,\ldots
,i_m$ such that $f_{i_m}\circ\cdots\circ f_{i_1}(x)$ belongs to
$U$. Then, by using the compactness of $A$ and the continuity of
the generators we can actually find an integer $m_0$ and functions
$i_1,\ldots,i_{m_0}$ from a neighborhood $V$ of $A$ into
$\{1,\ldots k\}$ such that for every $x\in A$, there is $m\leq
m_0$ such that $f_{i_m(x)}\circ\cdots\circ f_{i_1(x)}(x)\in U$.
Since $\Ls F^n(\{x_0\}) \subset A$ then $x_n=f_\omega^n(x_0)\in V$
for all $n$ large enough. Then $E_n=\{\omega \in \Omega: \,
\omega_{n+m_0}=i_{m_0}(x_n),\ldots, \omega_{n+1}=i_1(x_n)\}$ is
obviously contained in $E$, and since the random variable $x_n$
depends only on $\omega_1,\ldots,\omega_n$, we obtain from
\eqref{conditional assumption} that
$$\mathbb{P}(E\,|\, \omega_1,\ldots,\omega_n)\geq \mathbb{P}(E_n\,|\, \omega_1,\ldots,\omega_n)\geq p^{m_0}\eqdef \delta_0.$$
In particular, for every set $C$ in the $\sigma$-algebra generated
by $\omega_1,\ldots,\omega_n$, we have the inequality
$\mathbb{P}(E\cap C)\geq \delta_0 \, \mathbb{P}(C)$, and since $n$
is arbitrary, this inequality actually holds for any Borel set $C$
of $\Omega$. Choosing $C=\Omega\setminus E$, we deduce that
$\mathbb{P}(\Omega\setminus E)=0$  concluding that $E$ has full
probability.

Finally, we will prove that with probability $1$,
$O^+_\omega(x_0)$ is dense in $A$. First notice that any
quasi-attractor is a separable set. Hence, let us choose
$(z_i)_{i\in\mathbb{N}}$ a sequence dense in $A$ and for each $i$,
$(U_{i,j})_{j\in\mathbb{N}}$ a basis of neighborhood of $z_i$. Let
$$
\Omega(x_0)=\bigcap_{i\in \mathbb{N}}\bigcap_{j\in\mathbb{N}}
E(x_0,U_{i,j}).
$$
From the above, $\Omega(x_0)$ has full probability.  Given any
open set $U$ of $X$ so that $U\cap A\not=\emptyset$. Then we find
$U_{i,j}$ so that $U_{i,j}\subset U$.  Then, for every $\omega \in
\Omega(x_0)$, the set $O_\omega^+(x_0)$ intersects  $U_{i,j}$ and
in particular $U$. Thus $O^+_\omega(x_0)$ is dense in $A$.

\end{proof} {\sf On Le\'sniak deterministic chaos game for quasi-attractor
of non-expansive IFS:}

\begin{thm*}
Every quasi-attractor of a non-expansive IFS on a metric space is
renderable by the deterministic  chaos game.
\end{thm*}
\begin{proof}
We will use Lemma~\ref{claim2-desisdad-disj}. To accomplish this,
let $I$ be an open set so that $A\cap I \not=\emptyset$. We can
suppose that $I=B_{2\varepsilon}(y_0)$ is an open ball of radius
$2\varepsilon>0$ and centered at $y_0\in A$. By the compactness of
$A$, we can find $y_1,\dots,y_m\in A$ such that $ A \subset
B_{\varepsilon}(y_0)\cup B_{\varepsilon}(y_1)\cup \dots \cup
B_{\varepsilon}(y_m) \eqdef Z$. Being the action of  $\Gamma$ on
$A$ minimal, we find $h_1\in \Gamma$ such that $h_1(y_1)\in
B_{\varepsilon}(y_0)$. Recursively, constructed $h_{i-1}$ we find
$h_i\in \Gamma$ such that $h_i\circ \cdots \circ h_1(y_i) \in
B_{\varepsilon}(y_0)$. On the other hand, for each $z\in Z$, there
is $i\in \{0,\dots,m\}$ such that $d(z,y_i)\leq \varepsilon$.
Since the IFS is non-expansive,
$$
    d(h_{i}\circ \dots \circ h_1(z), h_i\circ \cdots \circ
    h_1(y_i)) \leq d(z,y_i)\leq \varepsilon.
$$
Since $d(h_i\circ \dots \circ
    h_1(y_i), y_0) \leq \varepsilon$ it follows that $d(h_{i}\circ \cdots
\circ h_1(z), y_0)\leq 2\varepsilon$. That is, $h_{i}\circ \cdots
\circ h_1(z) \in I$ for some $i\in \{0,\dots,m\}$ where we recall
that $h_{i}\circ \cdots \circ h_1$ for $i=0$ denotes the identity
map. Hence writing
$$
    f_{i_s}\circ \cdots \circ f_{i_1} =  h_m\circ \cdots \circ h_1
    \quad \text{where $f_{i_j} \in \mathscr{F}$}
$$
we have obtained that there is $t\in \{0,\dots,s\}$ so that
$f_{i_t}\circ \cdots \circ f_{i_1}(z) \in I$. Thus, by
Lemma~\ref{claim-desisdad-disj}, $A$ is renderable by the
deterministic chaos game.
\end{proof}

% \tableofcontents

\end{document}